\numberwithin{equation}{subsubsection}
\newtheorem{thm}[subsubsection]{Theorem}
\newtheorem*{thm*}{Theorem}
\newtheorem*{thmA}{Theorem A}
\newtheorem*{thmB}{Theorem B}
\newtheorem{cor}[subsubsection]{Corollary}
\newtheorem{lem}[subsubsection]{Lemma}
\newtheorem{prop}[subsubsection]{Proposition}
\theoremstyle{definition}
\newtheorem{defn}[subsubsection]{Definition}
\theoremstyle{remark}
\newtheorem{rem}[subsubsection]{Remark}
\DeclareMathOperator{\End}{End}
\DeclareMathOperator{\Hom}{Hom}
\DeclareMathOperator{\Fil}{Fil}
\DeclareMathOperator{\Lie}{Lie}
\DeclareMathOperator{\Gr}{Gr}
\DeclareMathOperator{\lin}{lin}
\DeclareMathOperator{\id}{id}
\DeclareMathOperator{\GL}{GL}
\DeclareMathOperator{\UU}{U}
\newcommand{\RZ}{\mathcal{N}}
\DeclareMathOperator{\Spf}{Spf}
\DeclareMathOperator{\inv}{inv}
\newcommand{\lprod}[1]{\langle#1\rangle}
\newcommand{\adele}{\mathbb A}
\newcommand{\set}[1]{\left\{#1\right\}}
\newcommand{\To}{\longrightarrow}
\newcommand{\isom}{\overset{\sim}{\To}}
\newcommand{\ZZ}{\mathbb{Z}}
\newcommand{\QQ}{\mathbb{Q}}
\newcommand{\FF}{\mathbb{F}}
\newcommand{\oo}{\mathcal{O}}
\newcommand{\LL}{\mathcal L}
\newcommand{\ignore}[1]{}
\DeclareSymbolFont{cyrletters}{OT2}{wncyr}{m}{n}
\DeclareMathSymbol{\Sha}{\mathalpha}{cyrletters}{"58}
\setlist[enumerate]{leftmargin=*}
\setlist[itemize]{leftmargin=*}
\title{Remarks on the arithmetic fundamental lemma}
\author[Chao Li]{Chao Li}\email{chaoli@math.columbia.edu} 
\address{Department of Mathematics, Columbia University, 2990 Broadway,
  New York, NY 10027}
\author[Yihang Zhu]{Yihang Zhu}\email{yihang@math.columbia.edu}
\address{Department of Mathematics, Columbia University, 2990 Broadway,
  New York, NY 10027}
\subjclass[2010]{11G18, 14G17; secondary 22E55}
\keywords{arithmetic Gan--Gross--Prasad conjectures, arithmetic fundamental lemmas, Rapoport--Zink spaces, special cycles}
\date{\today}
\begin{document}

\begin{abstract}
  W. Zhang's arithmetic fundamental lemma (AFL) is a conjectural identity between the derivative of an orbital integral on a symmetric space with an arithmetic intersection number on a unitary Rapoport--Zink space. In the minuscule case, Rapoport--Terstiege--Zhang have verified the AFL conjecture via explicit evaluation of both sides of the identity. We present a simpler way for evaluating the arithmetic intersection number, thereby providing a new proof of the AFL conjecture in the minuscule case.
\end{abstract}

\maketitle




\section{Introduction}

\subsection{Zhang's arithmetic fundamental lemma} The \emph{arithmetic Gan--Gross--Prasad conjectures} (arithmetic GGP) generalize the celebrated Gross--Zagier formula to higher dimensional Shimura varieties (\cite[\S 27]{Gan2012}, \cite[\S 3.2]{Zhang2012}). The arithmetic fundamental lemma (AFL) conjecture arises from Zhang's relative trace formula approach for establishing the arithmetic GGP for the group $U(1, n-2)\times U(1, n-1)$. It relates a derivative of orbital integrals on symmetric spaces to an arithmetic intersection number of cycles on unitary Rapoport--Zink spaces,
\begin{equation}
  \label{eq:AFL}
  O'(\gamma, \mathbf{1}_{S_n(\mathbb{Z}_p)})=-\omega(\gamma)\langle \Delta(\mathcal{N}_{n-1}), (\id \times g)\Delta(\mathcal{N}_{n-1})\rangle.
\end{equation}
For the precise definitions of quantities appearing in the identity, see \cite[Conjecture 1.2]{RTZ}. The left-hand side of (\ref{eq:AFL}) is known as the \emph{analytic side} and the right-hand side is known as the \emph{arithmetic-geometric} side. The AFL conjecture has been verified for $n=2,3$ (\cite{Zhang2012}), and for general $n$ in the minuscule case (in the sense that $g$ satisfies a certain minuscule condition) by Rapoport--Terstiege--Zhang \cite{RTZ}. In all these cases, the identity (\ref{eq:AFL}) is proved via explicit evaluation of both sides. When $g$ satisfies a certain inductive condition, Mihatsch \cite{Mihatsch2016} has recently developed a recursive algorithm which reduces the identity (\ref{eq:AFL}) to smaller $n$, thus establishing some new cases of the AFL conjecture.

In the minuscule case, the evaluation of the analytic side is relatively straightforward. The bulk of \cite{RTZ} is devoted to a highly nontrivial evaluation of the arithmetic-geometric side, which is truly a tour de force. Our main goal in this short note is to present a new (and arguably simpler) way to evaluate the arithmetic-geometric side in \cite{RTZ}.

\subsection{The main results} Let $p$ be an odd prime. Let $F=\mathbb{Q}_p$, $k=\overline{\mathbb{F}}_p$, $W=W(k)$ and $K=W[1/p]$. Let $\sigma$ be the $p$-Frobenius acting on $\bar \FF_p$, $W$ and $K$. Let $E=\mathbb{Q}_{p^2}$ be the unramified quadratic extension of $F$.  The \emph{unitary Rapoport--Zink space} $\mathcal{N}_n$ is the formal scheme over $\Spf W$ parameterizing deformations up to quasi-isogeny of height 0 of unitary $p$-divisible groups of signature $(1,n-1)$ (definitions recalled in \S \ref{sec:unit-rapop-zink}). Fix $n\ge2$ and write $\mathcal{N}=\mathcal{N}_n$, $\mathcal{M}=\mathcal{N}_{n-1}$ for short. There is a natural closed immersion $\delta: \mathcal{M}\rightarrow \mathcal{N}$. Denote by $\Delta\subseteq \mathcal{M}\times_W\mathcal{N}$ the image of $(\id,\delta): \mathcal{M}\rightarrow \mathcal{M}\times_W\mathcal{N}$, known as the (local) \emph{diagonal cycle} or \emph{GGP cycle} on $\mathcal{M}\times_W \mathcal{N}$.

Let $C_{n-1}$ be a non-split $\sigma$-Hermitian $E$-space of dimension $n-1$. Let $C_{n}=C_{n-1} \oplus E u$ (where the direct sum is orthogonal and $u$ has norm 1) be a non-split $\sigma$-Hermitian $E$-space of dimension $n$. The unitary group $J=\UU(C_n)$ acts on $\mathcal{N}$ in a natural way (see \S \ref{sec:group-j}). Let $g\in J(\mathbb{Q}_p)$. The arithmetic-geometric side of the AFL conjecture (\ref{eq:AFL}) concerns the arithmetic intersection number of the diagonal cycle $\Delta$ and its translate by $\id\times g$, defined as $$\langle \Delta, (\id\times g)\Delta\rangle:=\log p\cdot \chi(\mathcal{M}\times_W\mathcal{N}, \mathcal{O}_\Delta \otimes^\mathbb{L}\mathcal{O}_{(\id \times g)\Delta}).$$ When $\Delta$ and $(\id\times g)\Delta$ intersect properly, namely when the formal scheme
\begin{equation}
  \label{eq:intersection}
  \Delta \cap(\id\times g)\Delta \cong \delta(\mathcal{M})\cap \mathcal{N}^g
\end{equation}
 is an Artinian scheme (where $\mathcal{N}^g$ denotes the fixed points of $g$), the intersection number is simply $\log p$ times the $W$-length of the Artinian scheme (\ref{eq:intersection}).

 Recall that $g\in J(\mathbb{Q}_p)$ is called \emph{regular semisimple} if $$L(g):=\mathcal{O}_E\cdot u+\mathcal{O}_E\cdot g u+\cdots+ \mathcal{O}_E\cdot g^{n-1} u$$ is an $\mathcal{O}_E$-lattice in $C_n$. In this case, the \emph{invariant} of $g$ is the unique sequence of integers $$\inv(g):=(r_1\ge r_2\ge \ldots\ge r_n)$$ characterized by the condition that there exists a basis $\{e_i\}$ of the lattice $L(g)$ such that $\{p^{-r_i}e_i\}$ is a basis of the dual lattice $L(g)^\vee$. It turns out that the ``bigger'' $\inv(g)$ is, the more difficult it is to compute the intersection. With this in mind, recall that a regular semisimple element $g$ is called \emph{minuscule} if $r_1=1$ and $r_n\ge0$ (equivalently, $pL(g)^\vee\subseteq L(g)\subseteq L(g)^\vee$). In this minuscule case, the intersection turns out to be proper, and one of the main results of \cite{RTZ} is an explicit formula for the $W$-length of (\ref{eq:intersection}) at each of its $k$-point.

To state the formula, assume $g$ is regular semisimple and minuscule, and suppose $\RZ^g$ is nonempty. Then $g$ stabilizes both $L(g)^\vee$ and $L(g)$ and thus acts on the $\mathbb{F}_{p^2}$-vector space $L(g)^\vee/L(g)$. Let $P(T)\in \mathbb{F}_{p^2}[T]$ be the characteristic polynomial of $g$ acting on $L(g)^\vee/L(g)$. For any irreducible polynomial $R(T)\in \mathbb{F}_{p^2}[T]$, we denote its multiplicity in $P(T)$ by $m(R(T))$ and define its \emph{reciprocal} by $$R^*(T):=T^{\deg R(T)}\cdot\sigma( R(1/T)).$$ We say $R(T)$ is \emph{self-reciprocal} if $R(T)=R^*(T)$. By \cite[8.1]{RTZ}, if $(\delta(M)\cap \mathcal{N}^g)(k)$ is nonempty, then $P(T)$ has a unique self-reciprocal monic irreducible factor $Q(T)|P(T)$ such that $m(Q(T))$ is odd. We denote $c:=\frac{m(Q(T))+1}{2}$. Then $1\le c\le \frac{n+1}{2}$. Now we are ready to state the intersection length formula.

 \begin{thmA}[{\cite[Theorem 9.5]{RTZ}}]
Assume $g$ is regular semisimple and minuscule. Assume $p>c$. Then for any $x\in (\delta(M)\cap \mathcal{N}^g)(k)$, the complete local ring of $\delta(M)\cap \mathcal{N}^g$ at $x$ is isomorphic to $k[X]/X^c$, and hence has $W$-length equal to $c$.
\end{thmA}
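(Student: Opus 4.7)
My plan is to exploit the formal deformation theory of $\mathcal{N}$ at the $k$-point $x$ directly. Let $(X_0, \lambda_0, \iota_0)$ be the unitary $p$-divisible group at $x$, with covariant Dieudonn\'e module $M_0$ over $W$ and Hodge filtration $\omega_0 \subset M_0 / p M_0$. By Grothendieck--Messing, deformations over an artinian $W$-algebra $R$ correspond to $\iota$-compatible lifts of $\omega_0$ to a rank-one direct summand of $M_0 \otimes_W R$. Because the signature is $(1, n-1)$, this gives $\widehat{\oo}_{\mathcal{N}, x} \simeq W[[t_1, \ldots, t_{n-1}]]$ with tangent space naturally identified with $\Hom_k(\omega_0, (M_0/p M_0)/\omega_0)$. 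An analogous description holds for $\widehat{\oo}_{\mathcal{M}, \delta^{-1}(x)}$ in one fewer variable.

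I would then translate the two closed conditions into equations on this deformation space. The closed immersion $\delta: \mathcal{M} \hookrightarrow \mathcal{N}$ reflects the orthogonal decomposition $C_n = C_{n-1} \oplus E u$, so a deformation lies in $\delta(\mathcal{M})$ precisely when its Hodge line stays in the rank $(n-1)$ sub-object coming from $C_{n-1}$; this cuts out a smooth formal hypersurface, say $V(t_1) \subset \widehat{\oo}_{\mathcal{N}, x}$. The condition $x \in \mathcal{N}^g$ says the Hodge filtration is $g$-stable. The minuscule condition $p L(g)^\vee \subset L(g) \subset L(g)^\vee$ forces $g$ to preserve $L(g) \otimes W$, so $g$ acts on $M_0$, and its action on the tangent space at $x$ factors through its action on $L(g)^\vee / L(g)$, whose characteristic polynomial is $P(T)$.

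The heart of the argument is to decompose the tangent space according to the isotypic components for $g$ acting on $L(g)^\vee / L(g)$. On each component corresponding to an irreducible factor $R(T) \ne Q(T)$ of $P(T)$, regular semisimplicity together with self-duality should force the two cycles to meet transversally, contributing no length. The distinguished self-reciprocal factor $Q(T)$ of odd multiplicity $2c - 1$ picks out a one-dimensional formal disk inside $\delta(\mathcal{M})$ along which the $g$-fixed equation is nontrivial, reducing the problem to a single formal variable $X$. From the lift of the Hodge line and the action of $g$ on this disk, I expect a power series equation whose leading nonzero term has order exactly $c$, giving $\widehat{\oo}_{\delta(\mathcal{M}) \cap \mathcal{N}^g, x} \simeq k[X] / X^c$.

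The main obstacle will be pinning down the exponent $c$ precisely. The $g$-fixed condition, when transported to the completed local ring of $\delta(\mathcal{M})$ and written in the remaining variable $X$, yields a power series equation whose order is controlled by the multiplicity pattern of $Q(T)$ in $P(T)$. The hypothesis $p > c$ enters here to ensure that the combinatorial factors arising from the interaction between Frobenius on the lift and the $g$-action are $p$-adic units, so that the leading coefficient of the equation has $p$-adic valuation zero and the equation normalizes to $X^c$; without this bound one could pick up $p$-adic denominators or spurious $W$-extensions that would change the length. Once this equation is established, both the ring structure $k[X]/X^c$ and the $W$-length $c$ follow immediately.
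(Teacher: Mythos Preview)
Your proposal is a sketch rather than a proof, and it has two structural gaps that would need to be closed before any equation-chasing could begin.

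First, you never establish that the intersection is supported in the special fiber. Your conclusion $k[X]/X^c$ is a $k$-algebra, but a priori the complete local ring of $\delta(\mathcal{M})\cap\mathcal{N}^g$ at $x$ is a quotient of a power series ring over $W$, and you must show $p=0$ in it. In the paper this is exactly Theorem~B (reducedness of the minuscule special cycle $\mathcal{N}_\Lambda$), which then identifies $\delta(\mathcal{M})\cap\mathcal{N}^g$ with the fixed-point scheme $\mathcal{V}(\Lambda)^{\bar g}$ of a finite-order automorphism on a generalized Deligne--Lusztig variety over $k$. Your Grothendieck--Messing description over artinian $W$-algebras does not by itself rule out $W/p^2$-points.

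Second, the assertion that ``the action of $g$ on the tangent space at $x$ factors through its action on $L(g)^\vee/L(g)$'' is unjustified. The tangent space to $\mathcal{N}$ at $x$ is controlled by the special lattice $A$ and its Hodge hyperplane $\Fil^1 A_k$, not directly by $\Lambda/\Lambda^\vee$. What is true is that after one has reduced to $\mathcal{V}(\Lambda)$ (again Theorem~B), the relevant linear algebra lives in $\Omega(\Lambda)=\Lambda/\Lambda^\vee\otimes k$; but you cannot import this without first proving the reduction. Beyond these two points, the heart of your proposal, that the $g$-fixed equation on a one-dimensional formal disk has leading order exactly $c$, is stated as an expectation, and your account of where $p>c$ enters (``combinatorial factors \ldots\ are $p$-adic units'') does not match any mechanism that actually appears.

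For contrast, the paper's argument avoids writing any deformation equations on $\mathcal{N}$. After Theorem~B reduces to $\mathcal{V}(\Lambda)^{\bar g}$, the key trick is that the moduli description of $\mathcal{V}(\Lambda)$ involves a Frobenius twist $U\mapsto\sigma_*(U)$; over any local $k$-algebra with $\mathfrak{m}^p=0$ this Frobenius is constant, so modulo $\mathfrak{m}^p$ the local ring of $\mathcal{V}(\Lambda)^{\bar g}$ agrees with that of $(\mathbb{P}^d)^{\bar g}$ for a suitable projective space. A Jordan-block computation on $\mathbb{P}^d$ gives $k[X]/X^c$, and the hypothesis $p>c$ is used only in an abstract commutative-algebra lemma to pass from $\hat{\mathcal{S}}/\mathfrak{m}_{\hat{\mathcal{S}}}^p\cong k[X]/X^c$ back to $\hat{\mathcal{S}}\cong k[X]/X^c$.
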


We will present a simpler proof of Theorem A in Theorem \ref{thm:finalA}. Along the way, we will also give a simpler proof of the following Theorem B in Corollary \ref{cor:reducedness}, which concerns minuscule special cycles (recalled in \S \ref{subsec:special cycles}) on unitary Rapoport--Zink spaces and may be of independent interest.

\begin{thmB}[{\cite[Theorems 9.4, 10.1]{RTZ}}] Let $\mathbf{v}=(v_1,\ldots,v_n)$ be an $n$-tuple of vectors in $C_n$. Assume it is minuscule in the sense that $L(\mathbf v):= \mathrm{span}_{\oo_E} \mathbf{v}$ is an $\oo_E$-lattice in $C_n$ satisfying $ p L(\mathbf v) ^{\vee} \subseteq L(\mathbf v) \subseteq L(\mathbf v) ^{\vee}$. Let $\mathcal{Z}(\mathbf{v})\subseteq \mathcal{N}$ be the associated special cycle.  Then $\mathcal{Z}(\mathbf{v})$ is a reduced $k$-scheme.
\end{thmB}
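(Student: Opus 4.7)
My plan is to identify $\mathcal{Z}(\mathbf{v})$ scheme-theoretically with a closed Bruhat--Tits stratum in the Vollaard--Wedhorn stratification of $\mathcal{N}_{\mathrm{red}}$, and to inherit reducedness from that stratum.

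First, the minuscule hypothesis $pL(\mathbf{v})^\vee \subseteq L(\mathbf{v}) \subseteq L(\mathbf{v})^\vee$ says precisely that $\Lambda := L(\mathbf{v})^\vee$ is a vertex lattice in $C_n$, since $p\Lambda \subseteq \Lambda^\vee = L(\mathbf{v}) \subseteq \Lambda$. Such a $\Lambda$ defines a closed subscheme $\mathcal{N}_\Lambda \subseteq \mathcal{N}_{\mathrm{red}}$ (in the Vollaard--Wedhorn sense), isomorphic to a closed Deligne--Lusztig variety in the unitary flag variety attached to $\Lambda/\Lambda^\vee$ over $\mathbb{F}_{p^2}$. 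In particular $\mathcal{N}_\Lambda$ is smooth projective over $k$, hence reduced. It therefore suffices to prove the scheme-theoretic equality $\mathcal{Z}(\mathbf{v}) = \mathcal{N}_\Lambda$ as closed formal subschemes of $\mathcal{N}$.

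The inclusion $\mathcal{N}_\Lambda \subseteq \mathcal{Z}(\mathbf{v})$ is the easier direction: for any test $R$-point of $\mathcal{N}_\Lambda$, Dieudonn\'e/display theory shows that the relevant crystal contains $\Lambda^\vee = L(\mathbf{v})$, so each vector $v_i$ extends to a genuine homomorphism from the framing unitary $p$-divisible group, producing a factorization through each $\mathcal{Z}(v_i)$ and hence through $\mathcal{Z}(\mathbf{v}) = \bigcap_i \mathcal{Z}(v_i)$. For the reverse inclusion $\mathcal{Z}(\mathbf{v}) \subseteq \mathcal{N}_\Lambda$, one runs this argument backwards: on an $R$-point of $\mathcal{Z}(\mathbf{v})$ the homomorphisms attached to the $v_i$ generate a submodule of the crystal containing $L(\mathbf{v})$, and taking duals with respect to the unitary form forces the Hodge filtration to be bounded by $\Lambda$, which is precisely the defining condition for lying in $\mathcal{N}_\Lambda$.

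I expect the main obstacle to be upgrading the Dieudonn\'e-module matching from perfect base rings, where Cartier theory applies cleanly, to arbitrary Artinian thickenings of $k$-points of $\mathcal{N}$, since it is such thickenings that control the scheme structure of $\mathcal{Z}(\mathbf{v})$ beyond its reduction. The cleanest route is probably through the local model diagram for $\mathcal{N}$: cutting out $\mathcal{Z}(\mathbf{v})$ should reduce to a linear-algebraic condition on the linked Grassmannian, and the minuscule inequality should match exactly the inequality defining the smooth closed subvariety corresponding to $\mathcal{N}_\Lambda$, giving the desired scheme-theoretic equality and hence reducedness.
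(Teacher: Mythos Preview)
Your proposal has a genuine gap rooted in a conflation of two different objects that happen to share the name ``$\mathcal{N}_\Lambda$''. In the paper's conventions (which follow \cite{RTZ}), the closed formal subscheme $\mathcal{N}_\Lambda \subseteq \mathcal{N}$ is \emph{defined} by the moduli condition that every $v\in\Lambda^\vee$ lifts to an actual homomorphism, so the equality $\mathcal{Z}(\mathbf{v}) = \mathcal{N}_\Lambda$ is a tautology once $\Lambda = L(\mathbf{v})^\vee$. What Vollaard--Wedhorn actually prove is only that the \emph{reduced} subscheme $\mathcal{N}_\Lambda^{\mathrm{red}}$ is isomorphic to the Deligne--Lusztig variety $\mathcal{V}(\Lambda)$. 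Theorem~B is exactly the assertion $\mathcal{N}_\Lambda = \mathcal{N}_\Lambda^{\mathrm{red}}$, i.e.\ that the formal scheme has no nilpotent or $p$-adic thickening beyond its reduction. Your ``reverse inclusion'' argument, run carefully, only shows that an $R$-point of $\mathcal{Z}(\mathbf{v})$ satisfies the Hodge-filtration bound defining the \emph{formal} $\mathcal{N}_\Lambda$, which is the tautology again; it does not force the $R$-point to factor through the reduced stratum.

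What is actually needed are two separate facts: (a) $\mathcal{Z}(\mathbf{v})$ has no $W/p^2$-points, so it is a $k$-scheme; and (b) the special fiber $\mathcal{Z}(\mathbf{v})_k$ is reduced. The paper establishes both by first proving a Grothendieck--Messing description (Theorem~\ref{thm:analogue of MP}): for any $\mathcal{O}$ in the nilpotent crystalline site of $k$, the $\mathcal{O}$-points of the completion of $\mathcal{Z}(\mathbf{v})$ at $x$ are exactly the hyperplanes in $A_\mathcal{O}$ lifting $\mathrm{Fil}^1 A_k$ and containing the image of $\mathbf{v}$. From this, (a) follows by a short argument with the Hermitian form (Corollary~\ref{cor:no mod p^2 points}), and (b) follows by computing that the tangent space at each point has dimension $(t-1)/2$, matching $\dim\mathcal{V}(\Lambda)$ (Corollary~\ref{cor:dim of tangent space}). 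Your final paragraph gestures toward a local-model comparison, but the local model for $\mathcal{N}$ does not carry the data of the specific vectors $\mathbf{v}$, so it does not directly cut out $\mathcal{Z}(\mathbf{v})$; the Grothendieck--Messing description above is the correct substitute, and the step you are missing is precisely the execution of (a) and (b) from it.
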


\subsection{Novelty of the proof} 
The original proofs of Theorems A and B form the technical heart of \cite{RTZ} and occupies its two sections \S 10-\S 11. As explained below, our new proofs presented here have the merit of being much shorter and more conceptual.

\subsubsection{Theorem A}

The original proof of Theorem A uses Zink's theory of windows to compute the local equations of (\ref{eq:intersection}). It requires explicitly writing down the window of the universal deformation of $p$-divisible groups and solving quite involved linear algebra problems. Theorem B ensures that the intersection is entirely concentrated in the special fiber so that each local ring has the form $k[X]/X^\ell$. The assumption $p>c$ ensures $\ell< p$ so that the ideal of local equations is \emph{admissible} (see the last paragraph of \cite[p. 1661]{RTZ}), which is crucial in order to construct the frames for the relevant windows needed in Zink's theory.

Our new proof of Theorem A does not use Zink's theory and involves little explicit computation. Our key observation is that Theorem B indeed allows us to identify the intersection (\ref{eq:intersection}) as the fixed point scheme $\mathcal{V}(\Lambda)^{\bar g}$ of a finite order automorphism $\bar g$ on a generalized Deligne--Lusztig variety $\mathcal{V}(\Lambda)$ (\S \ref{sec:arithm-inters-as}), which becomes purely an algebraic geometry problem over the \emph{residue field} $k$. When $p> c$, it further simplifies to a more elementary problem of determining the fixed point scheme of a finite order automorphism $\bar g\in \GL_{d+1}(k)$ on a projective space $\mathbb{P}^d$ over $k$ (\S \ref{sec:study-mathc-vlambda}). This elementary problem has an answer in terms of the sizes of the Jordan blocks of $\bar g$ (Lemma \ref{lem:jordanblock}), which explains conceptually why the intersection multiplicity should be equal to $c$. Notice that our method completely avoids computation within Zink's theory, and it would be interesting to explore the possibility to remove the assumption $p> c$ using this method.  

\subsubsection{Theorem B}

The original proof of Theorem B relies on showing two things (by \cite[Lemma 10.2]{RTZ}): (1) the minuscule special cycle $\mathcal{Z}(\mathbf{v})$ has no $W/p^2$-points and (2) its special fiber $\mathcal{Z}(\mathbf{v})_k$ is regular. Step (1) is relatively easy using Grothendieck--Messing theory. Step (2) is more difficult: for super-general points $x$ on $\mathcal{Z}(\mathbf{v})_k$, the regularity is shown by explicitly computing the local equation  of $\mathcal{Z}(\mathbf{v})_k$ at $x$ using Zink's theory; for non-super-general points, the regularity is shown using induction and reduces to the regularity of certain special divisors, whose the local equations can again be explicitly computed using Zink's theory.

Our new proof of Theorem B does not use Zink's theory either and involves little explicit computation. Our key observation is that to show both (1) and (2), it suffices to consider the thickenings $\mathcal{O}$ of $k$ which are objects of the crystalline site of $k$. These $\mathcal{O}$-points of $\mathcal{Z}(\mathbf{v})$ then can be understood using only Grothendieck--Messing theory (Theorem \ref{thm:analogue of MP}). We prove a slight generalization of (1) which applies to possibly non-minuscule special cycles (Corollary \ref{cor:no mod p^2 points}). We then prove the tangent space of the minuscule special cycle $\mathcal{Z}(\mathbf{v})_k$ has the expected dimension (Corollary \ref{cor:dim of tangent space}). The desired regularity (2) follows immediately.

\subsubsection{}

 Our new proofs are largely inspired by our previous work on arithmetic intersections on GSpin Rapoport--Zink spaces \cite{rzo}.  
 The GSpin Rapoport--Zink spaces considered in \cite{rzo} are not of PEL type, which makes them technically more complicated. So the unitary case treated here can serve as a guide to \cite{rzo}. We have tried to indicate similarities between certain statements and proofs,  for both clarity and the convenience of the readers.

\subsection{Structure of the paper}

In \S \ref{sec:unit-rapop-zink-1}, we recall necessary backgrounds on unitary Rapoport--Zink spaces and the formulation of the arithmetic intersection problem. In \S \ref{sec:reduc-minusc-spec}, we study the local structure of the minuscule special cycles and prove Theorem B. In \S \ref{sec:inters-length-form}, we provide an alternative moduli interpretation of the generalized Deligne--Lusztig variety $\mathcal{V}(\Lambda)$ and prove Theorem A.

\subsection{Acknowledgments}  We are very grateful to M. Rapoport and W. Zhang for helpful conversations and comments. Our debt to the paper \cite{RTZ} should be clear to the readers. 

\section{Unitary Rapoport--Zink spaces}\label{sec:unit-rapop-zink-1}

In this section we review the structure of unitary Rapoport-Zink spaces. We refer to \cite{Vollaard2010}, \cite{Vollaard2011} and \cite{Kudla2011}  for the proofs of these facts.

\subsection{Unitary Rapoport--Zink spaces}\label{sec:unit-rapop-zink}

Let $p$ be an odd prime. Let $F=\mathbb{Q}_p$, $k=\overline{\mathbb{F}}_p$, $W=W(k)$ and $K=W[1/p]$. Let $\sigma$ be the $p$-Frobenius acting on $\bar \FF_p$, and we also denote by $\sigma $ the canonical lift of the $p$-Frobenius to $W$ and $K$. For any $\FF_p$-algebra $R$, we also denote by $\sigma$ the Frobenius $x\mapsto x^p $ on $R$. \ignore{More generally, for any $\FF_p$-vector space $L$, we denote $$\sigma : L \otimes R \to L\otimes R, ~ l\otimes x\mapsto l \otimes x^p. $$}

  Let $E=\mathbb{Q}_{p^2}$ be the unramified quadratic extension of $F$. Fix a $\QQ_p$-algebra embedding $\phi_0: \mathcal{O}_E\hookrightarrow W$ and denote by $\phi_1$ the embedding $\sigma \circ \phi_0: \oo_E \hookrightarrow W$. The embedding $\phi_0$ induces an embedding between the residue fields $\FF_{p^2} \hookrightarrow k$, which we shall think of as the natural embedding. For any $\oo_E$-module $\Lambda$ we shall write $\Lambda_W$ for $\Lambda \otimes _{\oo_E, \phi_0}  W$.

Let $r,s $ be positive integers and let $n= r+s$. We denote by $\mathcal{N}_{r,s}$ the unitary Rapoport--Zink spaces of signature $(r,s)$, a formally smooth formal $W$-scheme, parameterizing deformations up to quasi-isogeny of height 0 of unitary $p$-divisible groups of signature $(r,s)$. More precisely, for a $W$-scheme $S$, a \emph{unitary $p$-divisible groups} of signature $(r,s)$ over $S$ is a triple $(X, \iota, \lambda)$, where
\begin{enumerate}
\item  $X$ is a $p$-divisible group of dimension $n$ and height $2n$ over $S$,
\item  $\iota: \mathcal{O}_E\rightarrow \End(X)$ is an action satisfying the signature $(r,s)$ condition, i.e., for $\alpha\in\mathcal{O}_E$, $$\mathrm{char}(\iota(\alpha): \Lie X)(T)=(T-\phi_0(\alpha))^r(T-\phi_1(\sigma))^{s}\in \mathcal{O}_S[T],$$
\item $\lambda: X\rightarrow X^\vee$ is a principal polarization such that the associated Rosati involution induces $\alpha\mapsto \sigma(\alpha)$ on $\mathcal{O}_E$ via $\iota$.
\end{enumerate}
Over $k$, there is a unique such triple $(X,\iota, \lambda)$ such that $X$ is supersingular, up to $\mathcal{O}_E$-linear isogeny preserving the polarization up to scalar. Fix such a framing triple and denote it by $(\mathbb{X},\iota_{\mathbb X}, \lambda_{\mathbb X})$.

Let $\mathrm{Nilp}_W$ be the category of $W$-schemes on which $p$ is locally nilpotent. Then the \emph{unitary Rapoport--Zink space} $\mathcal{N}_{r,s}$ represents the functor $\mathrm{Nilp}_W\rightarrow\mathbf{Sets}$ which sends $S\in \mathrm{Nilp}_W$ to the set of isomorphism classes of quadruples $(X, \iota, \lambda,\rho)$, where $(X, \iota, \lambda)$ is a unitary $p$-divisible group  over $S$ of signature $(r,s)$ and $\rho: X\times_S S_k\rightarrow \mathbb{X}\times_k S_k$ is an $\mathcal{O}_E$-linear quasi-isogeny  of height zero which respects $\lambda$ and $\lambda_\mathbb{X}$ up to a scalar $c(\rho)\in \mathcal{O}_F^\times = \ZZ_p ^{\times}$ (i.e., $\rho^\vee\circ \lambda_{\mathbb{X}}\circ\rho=c(\rho)\cdot \lambda$).

In the following we denote $\mathcal{N}:=\mathcal{N}_{1,n-1}$, $\mathcal{M}:=\mathcal{N}_{1,n-2}$ and $\bar{\mathcal{N}_0}:=\mathcal{N}_{0,1}\cong \Spf W$. They have relative dimension $n-1$, $n$ and $0$ over $\Spf W$ respectively. We denote by  $\overline{\mathbb Y} = (\overline{\mathbb{Y}},\iota_{\overline{\mathbb{Y}}}, \lambda_{\overline{\mathbb{Y}}})$ the framing object for $\bar{\mathcal{N}_0}$ and denote by $\bar Y = (\bar Y, \iota_{\bar Y}, \lambda_{\bar Y})$ the universal $p$-divisible group over $\bar{\mathcal{N}_0}$. 
We may and shall choose framing objects $\mathbb X = (\mathbb X, \iota_{\mathbb X}, \lambda_{\mathbb X})$ and $\mathbb X^{\flat} =  (\mathbb X^{\flat} , \iota_{\mathbb X^{\flat} } , \lambda_{\mathbb X^{\flat}})$ for $\mathcal{N}$ and $\mathcal{M}$ respectively such that $$\mathbb X = \mathbb X^{\flat} \times \overline{\mathbb Y } $$  as unitary $p$-divisible groups.

\subsection{The group $J$} \label{sec:group-j}

The covariant Dieudonn\'e module $M=\mathbb{D}(\mathbb{X})$ of the framing unitary $p$-divisible group is a free $W$-module of rank $2n$ together with an $\mathcal{O}_E$-action (induced by $\iota$) and a perfect symplectic $W$-bilinear form $\langle  \cdot , \cdot  \rangle: M\times M\rightarrow W$ (induced by $\lambda$), cf. \cite[\S 2.3]{Vollaard2011}. Let $N=M \otimes_W K$ be the associated isocrystal and extend $\langle \cdot, \cdot \rangle$ to $N$ bilinearly. Let $F, V$ be the usual operators on $N$. We have \begin{align}\label{eq:comp of symp}
\lprod{F x, Fy} =  p \sigma ( \lprod{x,y}), \quad \forall x,y \in N.
\end{align}

The $E$-action decomposes $N$ into a direct sum of two $K$-vector spaces of dimension $n$, 
\begin{align}\label{eq:decomposing N}
N=N_0 \oplus N_1,
\end{align}
where the action of $E$ on $N_i$ is induced by the embedding $\phi_i$. Both $N_0$ and $N_1$ are totally isotropic under the symplectic form. The operator $F$ is of degree one and induces a $\sigma$-linear bijection $N_0\isom N_1$. Since the isocrystal $N$ is supersingular, the degree $0$ and $\sigma^2$-linear operator $$\Phi=V^{-1}F=p^{-1}F^2$$ has all slopes zero (\cite[\S 2.1]{Kudla2011}). We have a $K$-vector space $N_0$ together with a $\sigma^2$-linear automorphism $\Phi$.\footnote{Such a pair $(N_0, \Phi)$ is sometimes called a relative isocrystal.} The fixed points $$C=N_0^\Phi$$ is an $E$-vector space of dimension $n$ and $N_0=C \otimes _{E, \phi_0} K$. Fix $\delta\in \mathcal{O}_E^\times$ such that $\sigma(\delta)=-\delta$. Define a non-degenerate $\sigma$-sesquilinear form on $N_0$ by  
\begin{align}\label{eq:pairing on C}
\{x,y\}:=(p\delta)^{-1}\langle x, Fy\rangle.
\end{align} 
Using (\ref{eq:comp of symp}) it is easy to see that 
\begin{align}\label{eq:generalized Hermitian}
\sigma (\set{x,y}) = \set{\Phi y, x}, \quad \forall x,y \in N_0.
\end{align}

In particular, when restricted to $C$, the form $\set{\cdot, \cdot}$ is $\sigma$-Hermitian, namely 
\begin{align}\label{eq:being Herm}
\sigma(\set{x,y} ) = \set{y,x}, \quad \forall x, y \in C. 
\end{align} 
In fact, $(C,\set{\cdot,\cdot})$ is the unique (up to isomorphism) non-degenerate non-split $\sigma$-Hermitian $E$-space of dimension $n$. Let $J=\UU(C)$ be the unitary group of $(C,\set{\cdot,\cdot})$. It is an algebraic group over $F=\mathbb{Q}_p$. By Dieudonn\'e theory, the group $J(\mathbb{Q}_p)$ can be identified with the automorphism group of the framing unitary $p$-divisible group $(\mathbb{X},\iota_{\mathbb X},\lambda_{\mathbb X})$ and hence acts on the Rapoport--Zink space $\mathcal{N}$.


\subsection{Special homomorphisms}

By definition, the \emph{space of special homomorphisms} is the $\mathcal{O}_E$-module $\Hom_{\mathcal{O}_E}(\overline{\mathbb{Y}}, \mathbb{X})$. There is a natural $\mathcal{O}_E$-valued $\sigma$-Hermitian form on $\Hom_{\mathcal{O}_E}(\overline{\mathbb{Y}}, \mathbb{X})$ given by $$(x,y)\mapsto \lambda_{\overline{\mathbb{Y}} }^{-1}\circ \hat y\circ \lambda_\mathbb{X}\circ x\in \End_{\mathcal{O}_E}(\overline{\mathbb{Y}})\isom \mathcal{O}_E.$$ By \cite[Lemma 3.9]{Kudla2011}, there is an isomorphism of $\sigma$-Hermitian $E$-spaces 
\begin{align}\label{eq:identifying C}
\Hom_{\mathcal{O}_E}(\overline{\mathbb{Y}}, \mathbb{X}) \otimes_{\mathcal{O}_E}E \isom C.
\end{align}
Therefore we may view elements of $C$ as special quasi-homomorphisms.

\subsection{Vertex lattices}
\label{subsec:vertex lattices}
For any $\oo_E$-lattice $\Lambda\subset C$, we define the dual lattice $\Lambda^\vee:=\{x\in C: \set{x, \Lambda}\subseteq \mathcal{O}_E\}$. It follows from the $\sigma$-Hermitian property (\ref{eq:being Herm}) that we have $(\Lambda^{\vee})^{\vee}  = \Lambda$.

A \emph{vertex lattice} is an $\mathcal{O}_E$-lattice $\Lambda\subseteq C$ such that $p \Lambda\subseteq \Lambda^\vee\subseteq \Lambda$. Such lattices correspond to the vertices of the Bruhat--Tits building of the unitary group $\UU(C)$. Fix a vertex lattice $\Lambda$. The \emph{type} of $\Lambda$ is defined to be $t_\Lambda:=\dim_{\mathbb{F}_{p^2}} \Lambda/\Lambda^\vee$, which is always an \emph{odd} integer such that $1\le t_\Lambda\le n$ (cf. \cite[Remark 2.3]{Vollaard2010}). 

We define $\Omega_0 (\Lambda): =\Lambda/\Lambda^\vee$ and equip it with the perfect $\sigma$-Hermitian form $$(\cdot , \cdot): \Omega_0 (\Lambda)\times \Omega_0(\Lambda)\rightarrow \mathbb{F}_{p^2},\quad(x,y) : =  p \set{ \tilde x, \tilde y }\bmod p,$$ where $\set{\cdot, \cdot}$ is the Hermitian form on $C$ defined in (\ref{eq:pairing on C}), and $\tilde x, \tilde y \in \Lambda$ are lifts of $x,y$. 

We define $$\Omega(\Lambda) : = \Omega_0(\Lambda) \otimes _{\FF_{p^2}} k. $$
\begin{rem}\label{rem:to compare}
	To compare with the definitions in \cite{Vollaard2010}, our $\Omega_0(\Lambda)$ is the space $V$ in \cite[(2.11)]{Vollaard2010}, and our pairing $(\cdot, \cdot )$ differs from the pairing $(\cdot, \cdot)$ defined loc. cit. by a factor of the reduction $\bar \delta \in \FF_{p^2} ^{\times}$ of $\delta$.  
\end{rem}
\ignore{
Let $\Omega=\Omega_0 \otimes_{\mathbb{F}_{p^2}}k$ equipped with the induced $k$-valued pairing from extension of scalars: $$\Omega\times \Omega\rightarrow k,\quad (a \otimes  x, b \otimes y) : = a\sigma(b)\cdot (x,y),\quad a,b \in k, x,y\in \Omega_0.$$ This pairing is linear in the first variable and $\sigma$-linear in the second variable. In the following, when we want to emphasize the dependence of $\Omega_0$ and $\Omega$ on $\Lambda$, we will write $\Omega_0( \Lambda)$ and $\Omega(\Lambda)$. 
}
\ignore{
Notice for $x,y\in\Omega$, $$\lprod{x,y}=\sigma\lprod{y,\sigma^{-2}(x)}.$$ For any subspace $U\subseteq \Omega$, we define $\Phi(U):=\sigma^2(U)$ and $U^\perp:=\{ x\in \Omega: \lprod{x,U}=0\}$. Then it follows that $$(U^\perp)^\perp=\Phi(U),\quad \Phi(U^\perp)=\Phi(U)^\perp.$$}

 \subsection{The variety $\mathcal{V}(\Lambda)$} \label{subsec:V(Lambda)}
Let $\Lambda$ be a vertex lattice and let $\Omega_0 = \Omega _0(\Lambda)$. Recall from \S \ref{subsec:vertex lattices} that $\Omega_0$ is an $\FF_{p^2}$-vector space whose dimension is equal to the type $t = t_{\Lambda}$ of $\Lambda$, an odd number. Let $d: = (t -1) /2$. 
We define $\mathcal{V}(\Omega_0)$ to be the closed $\FF_{p^2}$-subscheme of the Grassmannian $\Gr_{d+1}(\Omega_0)$ (viewed as a scheme over $\FF_{p^2}$) such that for any $\FF_{p^2}$-algebra $R$, 
\begin{align}\label{eq:defn of V_Lambda}
\mathcal{V}(\Omega_0)(R)=\{U\subseteq \Omega_0\otimes_{\FF_{p^2}} R: ~ U \mbox{ is an $R$-module local direct summand of rank $d+1$ such that} ~ U^\perp\subseteq U\}.
\end{align} Here $U^{\perp}$ is by definition $\set{v\in \Omega_0 \otimes R: (v, u)_{ R} = 0, ~\forall u \in U}$, where $(\cdot, \cdot) _{R }$ is the $R$-sesquilinear form on $\Omega_0 \otimes R$ obtained from $(\cdot, \cdot)$ by extension of scalars (linearly in the first variable and $\sigma$-linearly in the second variable).
Then $\mathcal{V}(\Omega_0)$ is a smooth projective $\FF_{p^2}$-scheme of dimension $d$ by \cite[Proposition 2.13]{Vollaard2010} and Remark \ref{rem:to compare}. In fact, $\mathcal V(\Omega_0)$ can be identified as a (generalized) Deligne--Lusztig variety,  by \cite[\S 4.5]{Vollaard2011} (though we will not use this identification in the following).

	We write $\mathcal V(\Lambda)$ for the base change of $\mathcal V(\Omega_0) $ from $\FF_{p^2}$ to $k$.

\ignore{ Let $\mathcal{L}=\Phi^{-1}(U^\perp)$, then we have $(\mathcal{L}^\perp, \Phi(\mathcal{L}))=(U, U^\perp)$. The map $U\mapsto \mathcal{L}$ then gives an isomorphism $$\mathcal{V}(\Lambda)(k)\isom\{\mathcal{L}\subseteq \Omega \text{ totally isotropic}: \dim \mathcal{L}=d, \Phi(\mathcal{L})\subseteq \mathcal{L}^\perp\}.$$ It follows that
\begin{align*}
  \mathcal{V}(\Lambda)(k)&=\{\mathcal{L}\subseteq \Omega \text{ Lagrangian}: \dim (\mathcal{L}+\Phi(\mathcal{L}))=d+1\}\\
  &\cong\{(\mathcal{L}_{d-1},\mathcal{L}_d): \mathcal{L}_d \subseteq \Omega \text{ Lagrangian}, \mathcal{L}_{d-1}\subseteq \mathcal{L}_d\cap\Phi\mathcal{L}_d,\dim \mathcal{L}_{d-1}=d-1\},
\end{align*}
where the last bijection is given by $\mathcal L \mapsto (\mathcal L \cap \Phi(\mathcal L), \mathcal L)$.

\begin{rem}\label{rem:moduli}
 Note that since $\mathcal V(\Lambda)$ is a closed $k$-subvariety of $\Gr_{d+1} (\Omega)$, it is characterized by the set of its $k$-points as a subset of $\Gr_{d+1} (\Omega)(k)$. From this it is easy to see that the above moduli interpretation of $\mathcal V(\Lambda) (k)$ in terms of pairs $(\LL_{d-1}, \LL_d)$ also extends to a similar moduli interpretation of $\mathcal V(\Lambda) (R)$ for any $k$-algebra $R$, similarly as in \cite[\S 2.8]{rzo}.   
\end{rem}}

\subsection{Structure of the reduced scheme $\mathcal{N}^\mathrm{red}$}\label{subsec:str of red scheme}  For each vertex lattice $\Lambda\subseteq C$, we define $\mathcal{N}_\Lambda\subseteq \mathcal{N}$ to be the locus where $\rho_{X}^{-1}\circ \Lambda^\vee\subseteq\Hom(\bar Y, X)$, i.e., the quasi-homomorphisms $\rho^{-1}\circ v$ lift to actual homomorphisms for any $v\in \Lambda^\vee$. Then $\mathcal{N}_\Lambda$ is a closed formal subscheme by \cite[Proposition 2.9]{RZ96}. By \cite[\S 4]{Vollaard2011} we have an isomorphism of $k$-varieties 
\begin{align}\label{eq:N_Lambda and V_Lambda}
\mathcal{N}_\Lambda^\mathrm{red}\isom\mathcal{V}(\Lambda).
\end{align}

\subsection{Some invariants associated to a $k$-point of $\RZ$}
We follow \cite[\S 2.1]{Kudla2011}. \label{subsec:invts}

Let $x$ be a point in $\RZ(k)$. Then $x$ represents a tuple $(X, \iota , \lambda, \rho)$ over $k$ as recalled in \S \ref{sec:unit-rapop-zink}. Via $\rho$, we view the Dieudonn\'e module of $X$ as a $W$-lattice $M_x$ in $N$, which is stable under the operators $F$ and $V$. The endomorphism structure $\iota$ induces an action of $\oo_E \otimes _{\ZZ_p} W \cong W \oplus W $ on $M_x$, which is equivalent to the structure of a $\ZZ/2\ZZ$-grading on $M_x$ (into $W$-modules). We denote this grading by $$M_x = \mathrm{gr}_0 M_x \oplus \mathrm{gr}_1 M_x.$$ This grading is compatible with (\ref{eq:decomposing N}) in the sense that $$ \mathrm{gr}_i M_x = M \cap N_i, ~ i = 0,1.  $$ Moreover both $\mathrm{gr}_0 M_x$ and $\mathrm{gr}_1 M_x$ are free $W$-modules of rank $n$. 
 
Consider the $k$-vector space $M_{x,k} : = M_x \otimes _W k$. It has an induced $\ZZ/2\ZZ$-grading, as well as a canonical filtration  $\Fil^1 (M_{x,k}) \subset M_{x,k}$. Explicitly, $\Fil^1 (M_{x,k})$ is the image of $V(M_x) \subseteq M_x$ under the reduction map $M_x\to M_{x,k}$. Define $$\Fil^1 (\mathrm{gr}_i M_{x,k}) : =  \Fil^1 (M_{x,k}) \cap  \mathrm{gr}_i M_{x,k}.$$ Then $$ \Fil^1 (M_{x,k}) = \Fil^1 (\mathrm{gr}_0 M_{x,k}) \oplus \Fil^1 (\mathrm{gr}_1 M_{x,k}), $$ and by the signature $(1,n-1)$ condition we know that $\Fil^1 (\mathrm{gr}_0 M_{x,k})$ (resp. $\Fil^1 (\mathrm{gr}_1 M_{x,k})$) is a hyperplane (resp. line) in $\mathrm{gr}_0 M_{x,k}$ (resp. $\mathrm{gr}_1 M_{x,k}$). 

The symplectic form $\lprod{\cdot,\cdot}$ on $N$ takes values in $W$ on $M_x$, and hence induces a symplectic form on $M_{x,k}$ by reduction. The latter restricts to a $k$-bilinear non-degenerate pairing 
$$ \mathrm{gr}_0 M_{x,k} \times \mathrm{ gr} _1 M_{x,k} \to k.$$ Under the above pairing, the spaces $\Fil^1(\mathrm{gr} _0 M_{x,k})$ and $\Fil^1(\mathrm{gr}_1 M_{x,k})$ are annihilators of each other.  Equivalently, $\Fil^1 (M_{x,k})$ is a totally isotropic subspace of $M_{x,k}$. 
\subsection{Description of $k$-points by special lattices}\label{subsec:description}
For a $W$-lattice $A$ in $N_0$, we define its dual lattice to be $A^{\vee} : = \set{x\in N_0: \set{x,A} \subseteq W}$.
If $\Lambda$ is an $\oo_E$-lattice in $C$, then we have $(\Lambda_W) ^{\vee} = (\Lambda^{\vee}) _W$. In the following we denote both of them by $\Lambda_W^{\vee}$.  

\begin{defn}\label{defn:special lattice}
	A \emph{special lattice} is a $W$-lattice $A$ in $N_0$ such that $$ A^{\vee} \subseteq A \subseteq p^{-1} A^{\vee}$$ and such that $A/ A^{\vee}$ is a one-dimensional $k$-vector space. 
\end{defn}
\begin{rem}\label{rem:diff convention}
	The apparent difference between the above definition and the condition in \cite[Proposition 1.10]{Vollaard2010} (for $i=0$) is caused by the fact that we have normalized the pairing $\set{\cdot, \cdot }$ on $N_0$ differently from loc. cit., using an extra factor $(p\delta) ^{-1}$ (cf. (\ref{eq:pairing on C})). Our normalization is the same as that in \cite{RTZ}. 
\end{rem}
Recall the following result from \cite{Vollaard2010}.
\begin{prop}[{\cite[Proposition 1.10]{Vollaard2010}}]\label{prop:special lattices}
	There is a bijection from $\mathcal N(k)$ to the set of special lattices, sending a point $x$ to $\mathrm{gr}_0 M_x$ considered in \S \ref{subsec:invts}.  \qed
\end{prop}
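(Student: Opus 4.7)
The plan is to use covariant Dieudonné theory: a $k$-point of $\mathcal{N}$ is, via the framing $(\mathbb{X}, \iota_{\mathbb{X}}, \lambda_{\mathbb{X}})$ and its rational Dieudonné module $N$, equivalent to a $W$-lattice $\tilde M \subseteq N$ which is (i) $\oo_E$-stable, (ii) $F$- and $V$-stable, (iii) self-dual under $\lprod{\cdot,\cdot}$ up to a scalar in $\ZZ_p^{\times}$, (iv) of height zero relative to $M$, and (v) satisfies the signature $(1,n-1)$ condition. I then want to show that $\tilde M \mapsto A := \mathrm{gr}_0 \tilde M$ gives a bijection onto the set of special lattices in $N_0$, with explicit inverse $A \mapsto \tilde M(A) := A \oplus FA$.

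The first step is well-definedness. Given $\tilde M$, the $\oo_E$-stability yields the grading $\tilde M = \mathrm{gr}_0 \tilde M \oplus \mathrm{gr}_1 \tilde M$. Since $N_0$ and $N_1$ are totally isotropic, self-duality of $\tilde M$ with respect to $\lprod{\cdot,\cdot}$ amounts to saying that $\mathrm{gr}_0 \tilde M$ and $\mathrm{gr}_1 \tilde M$ are annihilators of each other under the induced perfect pairing $N_0 \times N_1 \to K$. Using the $\sigma$-linear isomorphism $F: N_0 \isom N_1$ and the normalization (\ref{eq:pairing on C}) for $\set{\cdot,\cdot}$, this annihilator condition translates into a relation between $A$ and $\mathrm{gr}_1 \tilde M$ of the form $\mathrm{gr}_1 \tilde M = F(A^{\vee})$ (up to the factor $p\delta$ absorbed in the normalization). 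Then $V\tilde M \subseteq \tilde M$, applied to $\mathrm{gr}_1 \tilde M$ via $V = p F^{-1}$ on the part landing in $N_0$, yields $pA \subseteq A^{\vee}$, i.e., $A \subseteq p^{-1} A^{\vee}$; the reverse inclusion $A^{\vee} \subseteq A$ is forced by the height-zero/volume condition together with the opposite containment for $V\tilde M \cap N_1$. Finally, the signature $(1,n-1)$ condition says that $\Fil^1(\mathrm{gr}_0 \tilde M_k)$ is a hyperplane, which one checks is equivalent to $\dim_k(A/A^{\vee}) = 1$.

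The second step is the construction of the inverse. Given a special lattice $A$, set $\tilde M(A) := A \oplus FA$. The $\oo_E$-grading is automatic. For $F$-stability, note that $F^2 = p\Phi$ on $N_0$, and from the $\sigma^2$-linear twisted Hermitian identity (\ref{eq:generalized Hermitian}) one deduces $\Phi(A)^{\vee} = \Phi(A^{\vee})$, so $\Phi$ preserves the collection of special lattices, whence $F^2 A = p\Phi(A) \subseteq \Phi(A^\vee)\subseteq A$; the $V$-stability is symmetric. Self-duality of $\tilde M(A)$ under $\lprod{\cdot,\cdot}$ unwinds, via (\ref{eq:pairing on C}), precisely to the sandwich $A^{\vee} \subseteq A \subseteq p^{-1} A^{\vee}$, and the scalar ambiguity matches a principal polarization. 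The signature and height-zero conditions drop out from $\dim_k(A/A^\vee) = 1$. The two constructions are visibly mutually inverse once one has established $\mathrm{gr}_1 \tilde M = F(\mathrm{gr}_0 \tilde M)$ in Step 1.

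The main obstacle I expect is purely bookkeeping: correctly tracking the normalization factor $(p\delta)^{-1}$ distinguishing our $\set{\cdot,\cdot}$ from the pairing used in \cite{Vollaard2010} (see Remark \ref{rem:diff convention}). This factor controls exactly where the powers of $p$ appear in the inclusions $A^{\vee} \subseteq A \subseteq p^{-1}A^{\vee}$ and in the compatibility of $F, V$ with duality; once it is absorbed consistently, everything reduces to the linear-algebraic dictionary between self-dual $F,V$-stable lattices in $N$ with $\oo_E$-grading and special lattices in $N_0$.
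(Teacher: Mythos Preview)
The paper gives no proof of this proposition; it cites \cite[Proposition 1.10]{Vollaard2010} and closes the statement with a qed box. Your strategy---pass to Dieudonn\'e lattices in $N$, use the $\oo_E$-grading, and project to $\mathrm{gr}_0$---is exactly Vollaard's, so at the level of approach there is nothing to compare.

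There is, however, a concrete error that goes beyond the normalization bookkeeping you anticipated. In Step~1 you assert $\mathrm{gr}_1\tilde M = F(A^{\vee})$, while at the end you assert $\mathrm{gr}_1\tilde M = F(\mathrm{gr}_0\tilde M)=FA$; these already contradict each other since $A\neq A^{\vee}$, and in fact neither is correct. Unwinding the self-duality of $\tilde M$ under $\lprod{\cdot,\cdot}$ via $\{x,y\}=(p\delta)^{-1}\lprod{x,Fy}$ together with the identity (\ref{eq:generalized Hermitian}) gives
\[
\mathrm{gr}_1\tilde M \;=\; F^{-1}(A^{\vee}),
\]
where $F^{-1}$ means the inverse of $F|_{N_1}\colon N_1\to N_0$ (equivalently $p^{-1}V|_{N_0}$). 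This is precisely the formula the paper itself invokes in the proof of Lemma~\ref{lem:two defns of Fil^1 A}. With your formula the $F$- and $V$-stability checks in Step~2 do not close up: try to actually derive $pA\subseteq A^{\vee}$ from $V$-stability assuming $\mathrm{gr}_1\tilde M=F(A^{\vee})$ and you will obtain a condition involving $\Phi$ rather than the desired inclusion. The correct inverse map is $A\mapsto A\oplus F^{-1}(A^{\vee})$; once you make this fix, the rest of your outline (self-duality $\Rightarrow$ the sandwich $A^{\vee}\subseteq A\subseteq p^{-1}A^{\vee}$, signature $\Rightarrow \dim_k A/A^{\vee}=1$, and the two constructions being mutually inverse) goes through.
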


\begin{rem}\label{rem:when lies in BT stratum}
	Let $x\in \mathcal N(k)$ and let $A$ be the special lattice associated to it by Proposition \ref{prop:special lattices}. Let $\Lambda$ be a vertex lattice. Then $x\in \mathcal N_{\Lambda} (k)$ if and only if $A\subseteq \Lambda_W $, if and only if $\Lambda^{\vee}_W \subseteq A^{\vee}$. (See also Remark \ref{rem:when lies in special cycle} below.)
\end{rem}

\subsection{Filtrations} We introduce the following notation:
\begin{defn}\label{defn:Fil A}
	Let $A$ be a special lattice. Write $A_k: = A\otimes_W k$. Let $x\in \RZ(k)$ correspond to $A$ under Proposition \ref{prop:special lattices}. Thus $A_k = \mathrm{gr}_0 M_{x,k}$. Define $\Fil^1 (A_k):= \Fil ^1 (\mathrm{gr}_0 M_{x,k})$ (cf. \S \ref{subsec:invts}). It is a hyperplane in $A_k$. 
\end{defn}
\begin{lem}\label{lem:two defns of Fil^1 A}
	Let $A$ be a special lattice. Then $\Phi^{-1} (A^{\vee})$ is contained inside $A$, and its image in $A_k$ is equal to $\Fil^1 (A_k)$.  
\end{lem}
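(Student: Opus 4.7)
The plan is to identify both $\Phi^{-1}(A^\vee)$ and the preimage of $\Fil^1(A_k)$ in $A$ with the single sublattice $V(B)$, where $B := \mathrm{gr}_1 M_x \subseteq N_1$ and $x \in \RZ(k)$ corresponds to $A$ under Proposition \ref{prop:special lattices}, so that $M_x = A\oplus B$. Since $V$ shifts the $\ZZ/2\ZZ$-grading on $N$, one has $V(B)\subseteq A$ and $V(A)\subseteq B$ automatically. Thus once the two identifications are made, both assertions of the lemma drop out.

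For the filtration side: from $VF=p$ we have $pM_x \subseteq V(M_x)$, so $\Fil^1(M_{x,k}) = V(M_x)/pM_x$. The grading decomposes $V(M_x) = V(A)\oplus V(B)$ with $V(A)\subseteq B$ and $V(B)\subseteq A$, hence $V(M_x)\cap A = V(B)$, and therefore $\Fil^1(A_k)$ is the image of $V(B)$ in $A_k = A/pA$.

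For the $\Phi^{-1}(A^\vee)$ side: using (\ref{eq:generalized Hermitian}) with the roles of $x$ and $y$ swapped, $\{\Phi x, a\} = \sigma(\{a,x\})$, so $\Phi(x)\in A^\vee$ iff $\{A, x\}\subseteq W$. By the definition (\ref{eq:pairing on C}) of $\{\cdot,\cdot\}$, and since $\delta\in\oo_E^\times$ forces $p\delta\, W = pW$, this rewrites as $\langle A, Fx\rangle\subseteq pW$, i.e., $Fx\in pA^\perp$ where $A^\perp\subseteq N_1$ is the orthogonal with respect to the symplectic form. The self-duality of $M_x$ (principal polarization), together with $N_0$ and $N_1$ being totally isotropic under $\lprod{\cdot,\cdot}$, forces $A^\perp = B$. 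So $\Phi(x)\in A^\vee$ iff $Fx\in pB$. Finally, using $V=pF^{-1}$ together with the $\sigma$-linearity of $F$ (which makes $F^{-1}$ satisfy $F^{-1}(pb) = \sigma^{-1}(p)F^{-1}(b) = pF^{-1}(b) = V(b)$), the condition $Fx\in pB$ is equivalent to $x\in V(B)$, giving $\Phi^{-1}(A^\vee) = V(B)$.

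The only real bookkeeping issue is transferring the $\sigma$-Hermitian form $\{\cdot,\cdot\}$ back to the symplectic form $\lprod{\cdot,\cdot}$ with the correct power of $p$ (handled by $p\delta W = pW$) and tracking the $\sigma$-twists in the manipulations of $F$, $V$, $\Phi$. No hard input beyond Proposition \ref{prop:special lattices} and the axioms of a Dieudonné module is needed; the lemma is essentially the compatibility of the dual-lattice construction in $N_0$ with the Hodge filtration coming from $V$.
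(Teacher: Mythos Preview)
Your proof is correct and follows essentially the same route as the paper: both arguments identify $\Fil^1(A_k)$ with the image of $V(\mathrm{gr}_1 M_x)$ and then show $\Phi^{-1}(A^\vee) = V(\mathrm{gr}_1 M_x)$. The only difference is that for the latter identity the paper quotes \cite[Proposition 1.10]{Vollaard2010} for the fact $\mathrm{gr}_1 M_x = F^{-1}(A^\vee)$ and then observes $VF^{-1} = \Phi^{-1}$, whereas you unwind the Hermitian form back to the symplectic form and use the self-duality of $M_x$ to get $A^\perp = \mathrm{gr}_1 M_x$ directly; this makes your version self-contained but is not a genuinely different idea.
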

\begin{proof}
	Let $A$ correspond to $x\in \RZ(k)$ under Proposition \ref{prop:special lattices}. Then $F, V$ both preserve the $W$-lattice $M_x$ in $N$ (cf. \S \ref{subsec:invts}). By definition, $\Fil^1(M_{x,k})$ is the image of $V(M_x) \subseteq M_x$ under the reduction map $M_x\to M_{x,k}$. Since the operator $V$ is of degree $1$ with respect to the $\ZZ/2\ZZ$-grading, we see that $\Fil^1(A_k)$ is the image of $V(\mathrm{gr}_1 M_x) \subseteq A$ under $A\to A_k$.  It suffices to prove that 
	\begin{align}\label{eq:Phi^-1}
	\Phi^{-1} (A^{\vee}) = V(\mathrm{gr}_1 M_x). 
	\end{align}	
	By the proof of \cite[Proposition 1.10]{Vollaard2010}, we have $\mathrm{gr}_1 M_x = F^{-1} A^{\vee}$. (Note that because of the difference of normalizations as discussed in Remark \ref{rem:diff convention}, what is denoted by $A^{\vee}$ here is denoted by $p A^{\vee}$ in \cite{Vollaard2010}. Also note that the integer $i$ appearing loc. cit. is $0$ in our case.) Therefore $V(\mathrm{gr}_1 M_x) = V(F^{-1} A^{\vee}). $ But $V F^{-1} = (V^{-1} F)^{-1} = \Phi^{-1}$ because $VF = FV =p$. Thus (\ref{eq:Phi^-1}) holds as desired.  
\end{proof}

\subsection{Special cycles}\label{subsec:special cycles} Let $\mathbf{v}$ be an arbitrary subset of $C$. We define the \emph{special cycle} $\mathcal{Z}(\mathbf{v})\subseteq \RZ$ to be the locus where $\rho^{-1}\circ v\in\Hom(\bar Y, X)$ for all $v\in \mathbf v$, i.e., all the quasi-homomorphisms $\rho^{-1}\circ v$ lift to actual homomorphisms. Note that  $\mathcal Z(\mathbf v)$ only depends on the $\mathcal{O}_E$-submodule $L(\mathbf{v})$ spanned by $\mathbf{v}$ in $C$, and we have $\mathcal Z(\mathbf v) = \mathcal Z( L(\mathbf v) )$.

We say $\mathbf{v}$ is \emph{minuscule} if $L(\mathbf v)$ is an $\oo_E$-lattice in $C$ satisfying $ p L(\mathbf v) ^{\vee} \subseteq L(\mathbf v) \subseteq L(\mathbf v) ^{\vee}$, or equivalently, if $L(\mathbf{v})$ is the dual of a vertex lattice. When this is the case we have $\mathcal Z(\mathbf v) = \mathcal N_{L(\mathbf{v})^\vee}$ by definition. 

\subsection{The intersection problem} \label{subsec:intersection problem}
 Let $C^\flat$ be the analogue for $\mathcal{M}$ of the Hermitian space $C$. Then $C\cong C^\flat \oplus Eu$ for some vector denoted by $u$ which is of norm $1$ and orthogonal to $C^{\flat}$. We have a closed immersion $$\delta: \mathcal{M}\rightarrow\mathcal{N},$$sending $(X,\iota,\lambda,\rho)$ to $(X\times \bar Y, \iota\times \iota_{\bar Y}, \lambda\times \lambda_{\bar Y},\rho\times \id)$. We have $\delta(\mathcal{M})=\mathcal{Z}(u)$. The closed immersion $\delta$ induces a closed immersion of formal schemes $$(\id, \delta): \mathcal{M}\rightarrow \mathcal{M} \times_W \mathcal{N}.$$ Denote by $\Delta$ the image of $(\id,\delta)$, which we call the (local) \emph{GGP cycle}.  For any $g\in J(\mathbb{Q}_p)$, we obtain a formal subscheme $$(\id\times g)\Delta\subseteq \mathcal{M}\times_W \mathcal{N},$$ via the action of $g$ on $\RZ$. Let $g\in J(\QQ_p)$ and let $\RZ^g\subseteq \RZ$ be the fixed locus of $g$. Then by definition we have $$\Delta\cap (\id\times g)\Delta\cong\delta(\mathcal{M})\cap \RZ^g.$$

Our goal is to compute the arithmetic intersection number $$\langle \Delta, (\id\times g)\Delta\rangle,$$ when $g$ is \emph{regular semisimple} and \emph{minuscule} (as defined in the introduction). Notice that $g\in J(\mathbb{Q}_p)$ is regular semisimple if and only if $\mathbf{v}(g):=(u, gu,\ldots, g^{n-1}u)$ is an $E$-basis of $C$. Also notice that a regular semisimple element $g$ is minuscule if and only if $\mathbf{v}(g)$ is minuscule in the sense of \S \ref{subsec:special cycles}.

\section{Reducedness of minuscule special cycles} \label{sec:reduc-minusc-spec}
\subsection{Local structure of special cycles}

\begin{defn}
Let $\mathscr C$ be the following category: 
		\begin{itemize}
			\item Objects in $\mathscr C$ are triples $(\oo, \oo\to k, \delta)$, where $\oo$ is a local Artinian $W$-algebra, $\oo \to k$ is a $W$-algebra map, and $\delta$ is a nilpotent divided power structure on $\ker (\oo \to k)$ (cf. \cite[Definitions 3.1, 3.27]{berthelotogus}).
			\item Morphisms in $\mathscr C$ are $W$-algebra maps that are compatible with the structure maps to $k$ and the divided power structures.  
		\end{itemize}
\end{defn}
\subsubsection{}\label{a construction}
Let $x\in \RZ(k)$ correspond to a special lattice $A$ under Proposition \ref{prop:special lattices}. Let $\oo\in \mathscr C$. By a \emph{hyperplane} in $A_{\oo}:  = A\otimes_W \oo$ we mean a free direct summand of $A_{\oo}$ of rank $n-1$. We define the $\ZZ/2\ZZ$-grading on $M_{x,\oo } : = M_x\otimes _W \oo$ by linearly extending that on $M_x$ (cf. \S \ref{subsec:invts}). Denote by $\widehat{\RZ} _x$ the completion of $\RZ$ at $x$. For any $\tilde x \in \widehat{\RZ} _x (\oo)$, we have a unitary $p$-divisible group of signature $(1,n-1)$ over $\oo$ deforming that over $k$ defined by $x$.  
By Grothendieck-Messing theory, we obtain the Hodge filtration $\Fil ^1_{\tilde x} M_{x, \oo} \subseteq  M_{x,\oo} $. Define $f_{\oo} (\tilde x)$ to be the intersection 
$$ \Fil ^1 _{\tilde x} M_{x, \oo} \cap \mathrm{gr_0} M_{x , \oo}$$ inside $M_{x ,\oo}$. By the signature $(1,n-1)$ condition, $f_{\oo} (\tilde x)$ is a hyperplane in $A_{\oo}$. It also lifts $\Fil ^1 A_k$ (cf. Definition \ref{defn:Fil A}) by construction. Thus we have defined a map \begin{align}
\label{eq:f_O}
f_{\oo} : \widehat{\mathcal N} _x (\oo ) \isom \set{\mbox{hyperplanes in $A_{\oo}$ lifting }\Fil^1 A_k }.
\end{align} By construction, $f_{\oo}$ is functorial in $\oo$ in the sense that the collection $(f_{\oo}) _{\oo \in \mathscr C}$ is a natural transformation between two set-valued functors on $\mathscr C$. Here we are viewing the right hand side of (\ref{eq:f_O}) as a functor in $\oo$ using the base change maps. 

The following result is the analogue of \cite[Theorem 4.1.7]{rzo}. As a direct consequence of the PEL moduli problem, it should be well known to the experts and is essentially proved in \cite[Proposition 3.5]{Kudla2011}.
\begin{thm}\label{thm:analogue of MP}Keep the notations in \S \ref{a construction}. 
	\begin{enumerate}
		\item The natural transformation $(f_{\oo}) _{\oo\in \mathscr C}$ is an isomorphism.
		\item Let $\mathbf v$ be a subset of $C$. If $x \in \mathcal Z(\mathbf v)(k)$, then $\mathbf v \subseteq A$. Suppose $x\in \mathcal{Z}(\mathbf{v})(k)$. Then for any $\oo \in \mathscr C$ the map $f_{\oo}$ induces a bijection 
		$$ \widehat{\mathcal Z(\mathbf v) } _{x}(\oo) \isom  $$$$ \set{\mbox{hyperplanes in $A_{\oo}$ lifting }\Fil^1 A_k \mbox{ and containing the image of  $\mathbf v$ in $A_{\oo}$ }}.$$ 
	\end{enumerate}

\end{thm}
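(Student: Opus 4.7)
The plan is to apply Grothendieck--Messing theory to the PEL-type moduli problem defining $\mathcal{N}$. For any $\oo \in \mathscr{C}$, the kernel of $\oo \to k$ carries a nilpotent divided power structure, so Grothendieck--Messing classifies deformations of the unitary $p$-divisible group $(X, \iota, \lambda)$ over $k$ (corresponding to $x$) to $\oo$ by lifts of the Hodge filtration $\Fil^1 M_{x,k} \subset M_{x,k}$ to a rank-$n$ direct summand $\widetilde{\Fil}^1 \subset M_{x,\oo}$ which is (a) $\oo_E$-stable, equivalently compatible with the $\ZZ/2\ZZ$-grading, and (b) totally isotropic under the symplectic form induced by $\lambda$.

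For part (1), condition (a) forces $\widetilde{\Fil}^1 = \widetilde{\Fil}^1_0 \oplus \widetilde{\Fil}^1_1$, with $\widetilde{\Fil}^1_i \subset \mathrm{gr}_i M_{x,\oo}$ lifting $\Fil^1 \mathrm{gr}_i M_{x,k}$. By the signature $(1,n-1)$ condition, $\widetilde{\Fil}^1_0$ must be a hyperplane in $A_{\oo} = \mathrm{gr}_0 M_{x,\oo}$ and $\widetilde{\Fil}^1_1$ a line in $\mathrm{gr}_1 M_{x,\oo}$. Since both graded pieces of $M_{x,\oo}$ are totally isotropic while the symplectic form restricts to a perfect pairing $\mathrm{gr}_0 M_{x,\oo} \times \mathrm{gr}_1 M_{x,\oo} \to \oo$, condition (b) becomes $\widetilde{\Fil}^1_1 = (\widetilde{\Fil}^1_0)^{\perp}$, which determines $\widetilde{\Fil}^1_1$ uniquely from the hyperplane $\widetilde{\Fil}^1_0$. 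This yields the bijection $f_{\oo}$, and the construction is manifestly functorial in $\oo$.

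For part (2), first note that an $\oo_E$-linear quasi-homomorphism $\rho^{-1} \circ v: \overline{\mathbb{Y}} \to X$ associated to $v \in C$ is an actual homomorphism over $k$ iff the induced map on Dieudonn\'e modules lands in $M_x$; using the grading and the identification (\ref{eq:identifying C}), this is equivalent to $v \in A$, so $\mathbf{v} \subseteq A$ whenever $x \in \mathcal{Z}(\mathbf{v})(k)$. To describe lifts over $\oo$, recall that $\overline{\mathbb{Y}}$ has signature $(0,1)$ and height $2$, so its Hodge filtration inside $\mathbb{D}(\overline{\mathbb{Y}})_{\oo}$ coincides with the degree-$0$ graded piece, a free $\oo$-module of rank one whose generator maps to (the image of) $v$ in $A_{\oo}$. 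By the Grothendieck--Messing criterion for lifting homomorphisms, $v$ lifts to a homomorphism $\overline{\mathbb{Y}}_{\oo} \to \widetilde{X}$ along $\widetilde{x} \in \widehat{\mathcal{N}}_x(\oo)$ iff the induced map of crystals sends this Hodge filtration into $\widetilde{\Fil}^1 M_{x,\oo}$; by the previous step this amounts to the image of $v$ in $A_{\oo}$ lying in $f_{\oo}(\widetilde{x})$. Intersecting this condition over $v \in \mathbf{v}$ gives the asserted moduli description of $\widehat{\mathcal{Z}(\mathbf{v})}_x(\oo)$.

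The main obstacle is purely bookkeeping: matching the $\ZZ/2\ZZ$-grading on $\mathbb{D}(\overline{\mathbb{Y}})$ and the product decomposition $\mathbb{X} = \mathbb{X}^{\flat} \times \overline{\mathbb{Y}}$ with the signature conventions recalled in \S \ref{sec:unit-rapop-zink}, and verifying the isotropy reduction step under the normalization (\ref{eq:comp of symp}) of the symplectic form on $N$. Since the underlying PEL Grothendieck--Messing statement is essentially \cite[Proposition 3.5]{Kudla2011} and the GSpin analogue is \cite[Theorem 4.1.7]{rzo}, no genuinely new input is required beyond careful tracking of the PEL data.
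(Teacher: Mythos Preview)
Your proposal is correct and follows essentially the same route as the paper's own proof: Grothendieck--Messing theory plus the PEL constraints (grading from $\iota$, isotropy from $\lambda$, ranks from the signature) for part (1), and for part (2) the identification of $\Fil^1_{\bar Y}\mathbb{D}(\overline{\mathbb{Y}})$ with $\mathrm{gr}_0\mathbb{D}(\overline{\mathbb{Y}}) = W\bar 1_0$ together with the Grothendieck--Messing criterion for lifting homomorphisms. The paper makes the bookkeeping you flag explicit by recalling from \cite{Kudla2011} the generator $\bar 1_0$ with $\phi_*(\bar 1_0) = v$, but the substance is the same.
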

 
\begin{proof}
	\textbf{(1)} We need to check that for all $\oo\in \mathscr C$ the map $f_{\oo}$ is a bijection.  Let $\tilde x\in \widehat{\RZ}_x(\oo).$ This represents a deformation over $\oo$ of the $p$-divisible group at $x$. Similarly to the situation in \S \ref{subsec:invts}, the compatibility with the endomorphism structure implies that  $$ \Fil ^1_{\tilde x} M_{x,\oo } =  \bigoplus_{i = 0} ^1 \Fil ^1 _{\tilde x} M_{x,\oo} \cap \mathrm{gr}_i M_{x,\oo}. $$ By the compatibility with the polarization, we know that $\Fil^1_{\tilde x} M_{x,\oo}$ is totally isotropic under the symplectic form on $M_{x, \oo}$. It follows that the two modules $\Fil ^1_{\tilde x} M_{x, \oo} \cap \mathrm{gr} _1 M_{x, \oo}$ and $\Fil ^1_{\tilde x} M_{x, \oo} \cap \mathrm{gr} _0 M_{x, \oo}$ are annihilators of each other if we identify $\mathrm{ gr} _1 M_{x, \oo}$ as the $\oo$-linear dual of $\mathrm{gr} _0 M_{x, \oo}$ using the symplectic form on $M_{x, \oo}$. Therefore, $\Fil^1_{\tilde x} M_{x, \oo}$ can be recovered from $f_{\oo} (\tilde x)$. This together with Grothendieck-Messing theory proves the injectivity of $f_{\oo}$. The surjectivity of $f_{\oo}$ also follows from Grothendieck-Messing theory and the above way of reconstructing $\Fil ^1_{\tilde x} M_{x,\oo}$ from its intersection with $\mathrm{gr}_0 M_{x,\oo}$. Note that the unitary $p$-divisible groups reconstructed in this way do satisfy the signature condition because we have started with \emph{hyperplanes} in $A_{\oo}$. 
	
\textbf{(2)} The statements follow from the proof of \cite[Proposition 3.5]{Kudla2011} and the definition of (\ref{eq:identifying C}) in \cite[Lemma 3.9]{Kudla2011}. We briefly recall the arguments here. If $\phi \in \Hom_{\oo_E} (\overline{\mathbb Y} , \mathbb X) \otimes_{\oo_E} E$ is a special quasi-homomorphism, the element $v\in C$ corresponding to $\phi$ under (\ref{eq:identifying C}) is by definition the projection to $N_0$ of $\phi_*(\bar 1_0) \in N$, where $\phi_*$ is the map $\mathbb D( \overline{\mathbb Y}) \otimes _W K \to \mathbb D (\mathbb X ) \otimes _W K = N$ induced by $\phi$, and $\bar 1_0$ is a certain fixed element in $\mathbb D(\overline{\mathbb Y})$. In fact, $\bar 1_0$ is chosen such that 
\begin{itemize}
	\item $W \bar 1_0= \mathrm{gr}_0  \mathbb D (\overline{\mathbb Y})$ , where the grading is with respect to the $\oo_E$-action on $\overline{\mathbb Y}$,
	\item $W \bar 1_0 = \Fil^1 _{\bar Y} \mathbb D (\overline {\mathbb Y})$,  the Hodge filtration for the deformation $\bar Y$ of $\overline{\mathbb Y}$ over $W$.
\end{itemize}
In particular $v$ and $\phi $ are related by the formula $v= \phi_*(\bar 1_0)$, as the projection to $N_0$ is not needed. 

From now on we assume without loss of generality that $\mathbf v = \set{v}$, with $v$ corresponding to $\phi$ as in the above paragraph. If $x\in \mathcal Z(v) (k)$, then $\phi_*$ has to map $\mathbb D(\overline{\mathbb Y})$ into $M_x$, so $v\in M_x$. Since $\phi_*$ is compatible with the $\ZZ/2\ZZ$-gradings, we further have $v\in A$. We have shown that if $x\in \mathcal Z(v) (k)$, then $v\in A$.

Now suppose $x\in \mathcal{Z}(\mathbf{v})(k)$. Let $\oo \in \mathscr C$. Write $v_{\oo} : = v\otimes 1 \in A_{\oo} \subset M_{x,\oo}$.  For all $\tilde x \in \widehat{\RZ}_x (\oo)$, by Grothendieck-Messing theory we know that $\tilde x \in \widehat{\mathcal Z (v) } _x (\oo)$ if and only if the base change of $\phi_*$ to $\oo$ (still denoted by $\phi_*$) preserves the Hodge filtrations, i.e. $$\phi_* (\Fil^1_{\bar Y} \mathbb D(\overline{\mathbb Y})) \subseteq \Fil^1_{\tilde x} M_{x,\oo} .$$ Since $W \bar 1_0 = \Fil^1_{\bar Y} \mathbb D(\overline{\mathbb Y})$, this last condition is equivalent to $v_{\oo} \in \Fil^1_{\tilde x} M_{x,\oo}$. Again, because $\phi_*$ is compatible with the $\ZZ/2\ZZ$-gradings, the last condition is equivalent to $v_{\oo} \in f_{\oo} (\tilde x)$. In conclusion, we have shown that $\tilde x \in \widehat{\RZ}_x (\oo)$ is in $ \widehat{\mathcal Z (v) } _x (\oo)$ if and only if $v_{\oo} \in f_{\oo} (\tilde x)$, as desired.    
\end{proof}

\begin{cor}\label{cor:k point in special cycle}
	Let $x \in \RZ(k)$ correspond to the special lattice $A$. Let $\mathbf v$ be a subset of $C$. Then $x\in \mathcal Z(\mathbf v) (k)$ if and only if $\mathbf v \subseteq A^{\vee}$.
\end{cor}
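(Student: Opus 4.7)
The plan is to reduce Corollary \ref{cor:k point in special cycle} to a Dieudonn\'e-module calculation refining the argument in the proof of Theorem \ref{thm:analogue of MP}(2). Since $\mathcal Z(\mathbf v) = \bigcap_{v\in \mathbf v} \mathcal Z(v)$, it suffices to treat the case $\mathbf v = \{v\}$. By definition, $x\in \mathcal Z(v)(k)$ is equivalent to the special quasi-homomorphism $\phi: \overline{\mathbb Y}\to \mathbb X$ corresponding to $v$ under (\ref{eq:identifying C}) lifting to an honest $\oo_E$-linear homomorphism at $x$, which via the covariant Dieudonn\'e functor is the same as asking the $K$-linear extension $\phi_*: \mathbb D(\overline{\mathbb Y})\otimes K \to N$ to carry $\mathbb D(\overline{\mathbb Y})$ into $M_x$. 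Thanks to the $\oo_E$-equivariance of $\phi_*$, this containment decomposes into two separate conditions indexed by the $\ZZ/2\ZZ$-grading.

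On $\mathrm{gr}_0 \mathbb D(\overline{\mathbb Y}) = W\bar 1_0$ the condition reads $\phi_*(\bar 1_0) = v \in \mathrm{gr}_0 M_x = A$, which recovers the condition already identified in the proof of Theorem \ref{thm:analogue of MP}(2). To extract the sharper containment $v\in A^\vee$, I will use the remaining degree-one condition. A short check using the signature $(0,1)$ condition for $\overline{\mathbb Y}$ (which forces $\Fil^1 \mathbb D(\overline{\mathbb Y}) = \mathrm{gr}_0 \mathbb D(\overline{\mathbb Y}) = W\bar 1_0$) yields $V\bar 1_1 = \bar 1_0$ for a suitable generator $\bar 1_1$ of $\mathrm{gr}_1 \mathbb D(\overline{\mathbb Y})$, equivalently $F\bar 1_0 = p\bar 1_1$. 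Since $\phi_*$ commutes with $F$, one obtains $\phi_*(\bar 1_1) = p^{-1} Fv \in N_1$.

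The degree-one condition $p^{-1}Fv \in \mathrm{gr}_1 M_x$ can now be rewritten using the identity $\mathrm{gr}_1 M_x = F^{-1}(A^\vee)$ established in the proof of Lemma \ref{lem:two defns of Fil^1 A}; it becomes $F(p^{-1}Fv) = p^{-1}F^2 v = \Phi v \in A^\vee$, which simplifies to $v\in A^\vee$ since $v\in C = N_0^{\Phi}$ satisfies $\Phi v = v$. Because $A^\vee \subseteq A$, the condition $v\in A^\vee$ automatically implies $v\in A$, while conversely if $v\in A^\vee$ both graded conditions are satisfied, so $\phi_*$ is integral and $x\in \mathcal Z(v)(k)$. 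The most delicate step is the verification of $F\bar 1_0 = p\bar 1_1$ for a correctly normalized $\bar 1_1$; this reduces to a short direct computation using that the Hodge filtration of $\overline{\mathbb Y}$ is entirely contained in the degree-zero piece, and everything else is bookkeeping with the formulas already assembled in \S \ref{sec:unit-rapop-zink-1}.
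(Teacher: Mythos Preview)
Your argument is correct. It differs from the paper's proof in that you bypass the Grothendieck--Messing packaging of Theorem~\ref{thm:analogue of MP}(2) and instead argue directly with Dieudonn\'e modules over $k$: you check the integrality of $\phi_*$ on both graded pieces of $\mathbb D(\overline{\mathbb Y})$, computing $\phi_*(\bar 1_1)=p^{-1}Fv$ explicitly and then invoking $\mathrm{gr}_1 M_x=F^{-1}(A^\vee)$. The paper, by contrast, first reformulates the condition as ``$\mathbf v\subseteq A$ and the image of $\mathbf v$ lies in $\Fil^1(A_k)$'' via Theorem~\ref{thm:analogue of MP}(2) at $\oo=k$, and only then converts to a lattice condition using Lemma~\ref{lem:two defns of Fil^1 A}. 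Both routes land on the same equation $\Phi v\in A^\vee$ and finish with $\Phi$-invariance of $v$; indeed your use of $\mathrm{gr}_1 M_x=F^{-1}(A^\vee)$ is exactly the input to the proof of Lemma~\ref{lem:two defns of Fil^1 A}. Your approach is slightly more self-contained for this particular corollary (no appeal to Grothendieck--Messing is needed, only classical Dieudonn\'e theory), whereas the paper's phrasing has the advantage that the Hodge-filtration criterion it isolates is precisely what is reused for general $\oo\in\mathscr C$ in the subsequent corollaries.
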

\begin{proof}
By part (2) of Theorem \ref{thm:analogue of MP} applied to $\oo = k$, we see that $x \in \mathcal Z(\mathbf v)(k)$ if and only if $\mathbf v \subseteq A$ and the image of $\mathbf v$ in $A_k$ is contained in $\Fil^1 (A_k)$. The corollary follows from Lemma \ref{lem:two defns of Fil^1 A} and the $\Phi$-invariance of $\mathbf v$.  
\end{proof}

\begin{rem}\label{rem:when lies in special cycle}
	  Note that Remark \ref{rem:when lies in BT stratum} is a special case of Corollary \ref{cor:k point in special cycle}. 
\end{rem}

\subsection{Proof of the reducedness}

\begin{cor}\label{cor:no mod p^2 points}
	Let $\Lambda$ be an $\oo_E$-lattice in $C$ with $p^i \Lambda \subseteq \Lambda^{\vee} \subseteq \Lambda$ for some $i \in \ZZ_{\geq 1}$. Then the special cycle $\mathcal Z(\Lambda^{\vee})$ defined by $\Lambda ^{\vee}$ has no $ (W/p^{i+1})$-points. In particular, taking $i=1$ we see that $\RZ_{\Lambda} (W/p^2) = \emptyset$ for any vertex lattice $\Lambda$.
\end{cor}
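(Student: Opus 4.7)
The plan is to argue by contradiction using the local chart for $\mathcal{Z}(\Lambda^{\vee})$ provided by Theorem \ref{thm:analogue of MP}. Suppose $\tilde{x}\in \mathcal{Z}(\Lambda^{\vee})(\mathcal{O})$ for $\mathcal{O}:=W/p^{i+1}$. Before invoking Theorem \ref{thm:analogue of MP}, I will check that $\mathcal{O}\in\mathscr{C}$: the ideal $\ker(\mathcal{O}\to k)=pW/p^{i+1}W$ inherits from $W$ the standard divided-power structure $\gamma_{n}(p)=p^{n}/n!$, which is well-defined and nilpotent because $p$ is odd so $v_{p}(p^{n}/n!)\to \infty$.

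Next, let $x\in \mathcal{Z}(\Lambda^{\vee})(k)$ be the reduction of $\tilde{x}$, with associated special lattice $A$. Corollary \ref{cor:k point in special cycle} applied to the subset $\Lambda^{\vee}\subseteq C$ yields $\Lambda^{\vee}\subseteq A^{\vee}$; dualizing inside $N_{0}$ gives $A\subseteq \Lambda$. Combined with the hypothesis $p^{i}\Lambda\subseteq \Lambda^{\vee}$, this produces the key inclusion
$$
p^{i}A\;\subseteq\; p^{i}\Lambda\;\subseteq\; \Lambda^{\vee}.
$$

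I then apply Theorem \ref{thm:analogue of MP}(2) to $\tilde{x}$: it corresponds to a hyperplane $H\subseteq A_{\mathcal{O}}$ lifting $\Fil^{1}(A_{k})$ and containing the image of $\Lambda^{\vee}$. Let $H'\subseteq A$ be the preimage of $H$ under the reduction map $A\to A_{\mathcal{O}}$; then $\Lambda^{\vee}\subseteq H'$. Since $H$ is a rank-$(n-1)$ direct summand of the free $\mathcal{O}$-module $A_{\mathcal{O}}$, the quotient $A/H'\cong A_{\mathcal{O}}/H$ is free of rank one over $\mathcal{O}$, hence cyclic of order exactly $p^{i+1}$. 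In particular $p^{i}A\not\subseteq H'$ (for any $e\in A$ generating $A/H'$, the class of $p^{i}e$ is nonzero), contradicting the inclusion $p^{i}A\subseteq \Lambda^{\vee}\subseteq H'$ from the previous step. The ``in particular'' clause then follows by specializing to $i=1$ and recalling from \S\ref{subsec:special cycles} that $\mathcal{N}_{\Lambda}=\mathcal{Z}(\Lambda^{\vee})$.

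The main subtlety is confirming that $W/p^{i+1}$ supports a nilpotent DP structure so that Theorem \ref{thm:analogue of MP} applies; once that is in place, everything reduces to the elementary length comparison on $A$ in the last paragraph. Note that the argument is robust in $i$, so it handles the ``non-minuscule'' generalization (any $i\geq 1$) essentially for free.
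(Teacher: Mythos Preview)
Your proof is correct and reaches the contradiction by a cleaner route than the paper's. Both arguments start the same way: $\mathcal{O}=W/p^{i+1}\in\mathscr{C}$, and Theorem~\ref{thm:analogue of MP}(2) yields a hyperplane $H\subseteq A_{\mathcal{O}}$ containing the image of $\Lambda^{\vee}$. From there the paper picks a functional $l\in\Hom_{\mathcal{O}}(A_{\mathcal{O}},\mathcal{O})$ cutting out $H$, lifts it to an element $\tilde{l}\in A^{\vee}$, and chases $\tilde{l}$ through the identity $\sigma(\{x,y\})=\{\Phi y,x\}$ and the $\Phi$-stability of $\Lambda_W$ until it lands in $pA^{\vee}$, contradicting $l(A_{\mathcal{O}})=\mathcal{O}$. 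You instead work entirely at the level of lattice inclusions: from $\Lambda^{\vee}_W\subseteq A^{\vee}$ you pass to $A\subseteq\Lambda_W$, whence $p^{i}A\subseteq p^{i}\Lambda_W\subseteq\Lambda^{\vee}_W\subseteq H'$, and then the index count $A/H'\cong W/p^{i+1}$ immediately gives $p^{i}A\not\subseteq H'$. This avoids the elementwise bookkeeping with $\tilde{l}$ and makes the role of the hypothesis $p^{i}\Lambda\subseteq\Lambda^{\vee}$ transparent.

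Two small points to tighten. First, throughout you should write $\Lambda_W$ and $\Lambda^{\vee}_W$ rather than $\Lambda$ and $\Lambda^{\vee}$ when comparing with the $W$-lattice $A$ inside $N_0$. Second, the step ``dualizing inside $N_0$ gives $A\subseteq\Lambda$'' hides a twist: since $\{\cdot,\cdot\}$ on $N_0$ is only $\sigma$-sesquilinear (not Hermitian), one has $(A^{\vee})^{\vee}=\Phi(A)$ rather than $A$ in general. Your implication $\Lambda^{\vee}_W\subseteq A^{\vee}\Rightarrow A\subseteq\Lambda_W$ is nonetheless valid precisely because $\Lambda_W$ is $\Phi$-stable (as $\Lambda\subseteq C=N_0^{\Phi}$), so $\Phi(A)\subseteq(\Lambda^{\vee}_W)^{\vee}=\Lambda_W$ gives $A\subseteq\Phi^{-1}(\Lambda_W)=\Lambda_W$. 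This is worth making explicit; it is exactly the same $\Phi$-stability that the paper invokes when passing from $\tilde{l}$ to $\Phi(\tilde{l})$.
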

\begin{proof}
	 
	Let $\oo = W/p^{i+1}$, equipped with the reduction map $W/p^{i+1} \to k$ and the natural divided power structure on the kernel $p \oo$. Then $\oo \in \mathscr C$. Assume $\mathcal Z(\Lambda^{\vee})$ has an $\oo$-point $\tilde x$ reducing to a $k$-point $x$. Let $A$ be the special lattice corresponding to $x$ (cf. \S \ref{subsec:description}). By Theorem \ref{thm:analogue of MP}, there exists a hyperplane $P$ in $A_{\oo}$ lifting $\Fil^1 (A_k)$, such that $P \supseteq \Lambda^{\vee} \otimes_{\oo_E} \oo$. Since $P$ is a hyperplane in $A_{\oo}$, there exists an element $l \in \Hom_{\oo} (A_{\oo}, \oo)$ such that 
	\begin{align}\label{eq:inductive condition for l}
	l(P) = 0 \quad \mbox{ and } \quad l(A_{\oo}) = \oo.
	\end{align}
	We may find an element $\tilde l \in A^{\vee} \subseteq  N_0$ to represent $l$, in the sense that for all $a\otimes 1 \in A_{\oo}$ with $a\in A$, we have 
	$$ l (a\otimes 1) = \mbox{image of }\{a, \tilde l\} \mbox{ under } W\to \oo. $$
Since $l(\Lambda^{\vee} \otimes \oo) \subseteq l (P) =0$, we know that $\{v, \tilde l \}\in  p^{i+1} W$ for all $v\in \Lambda^{\vee}$. Since $\Lambda^{\vee} \subseteq C = N_0 ^{\Phi}$, applying (\ref{eq:generalized Hermitian}) we see that $\{ \tilde l, v \} \in p^{i+1} W $ for all $v\in \Lambda^{\vee}$  . Therefore 
 $$ p^{-i-1} \tilde l \in (\Lambda^{\vee} _W) ^{\vee} = \Lambda_W,$$ and thus $\tilde l \in p^{i+1 } \Lambda_W$, which is contained in $  p \Lambda^{\vee} _W$ by hypothesis.  Since $\Lambda$ is $\Phi$-invariant, we also have $\Phi (\tilde l) \in p \Lambda^{\vee} _W$. But $ \Lambda_W^{\vee} \subseteq A^{\vee}$ by Corollary \ref{cor:k point in special cycle}, so $\Phi(\tilde l) \in pA^{\vee}$.
 It follows that for all $a\in A$, we have $\{ \Phi (\tilde l)  ,a \} \in p W$, and therefore $$\{a, \tilde l \} \xlongequal{ (\ref{eq:generalized Hermitian})} \sigma^{-1} (\{\Phi (\tilde l) , a\}) \in p W$$ contradicting with the second condition in (\ref{eq:inductive condition for l}).      
 \end{proof}
\begin{cor}\label{cor:dim of tangent space}
Let $\Lambda$ be a vertex lattice of type $t$ and let $x \in \mathcal N_{\Lambda} (k)$. Then the tangent space $\mathcal T_{x} \mathcal N_{\Lambda, k}$ to $\mathcal N_{\Lambda, k}$ at $x$, where $\mathcal N_{\Lambda, k}$ is the special fiber (i.e. base change to $k$) of $\mathcal N_{\Lambda}$, is of $k$-dimension $(t-1)/2$. 
\end{cor}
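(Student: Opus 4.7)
The strategy is to apply Theorem \ref{thm:analogue of MP} with $\oo = k[\epsilon]/\epsilon^2$, equipped with the trivial divided power structure on $(\epsilon)$, so that $\oo \in \mathscr C$. Since $\mathcal N_{\Lambda} = \mathcal Z(\Lambda^{\vee})$ (cf.\ \S \ref{subsec:special cycles}), the isomorphism $f_{\oo}$ identifies $\mathcal T_x \mathcal N_{\Lambda, k}$ with the set of hyperplanes $P \subseteq A_{\oo}$ that lift $\Fil^1 A_k$ and contain the image of $\Lambda^{\vee}$ in $A_{\oo}$. Here $A$ is the special lattice attached to $x$ by Proposition \ref{prop:special lattices}.

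A lift $P$ of $\Fil^1 A_k$ to $A_{\oo}$ can be described as $\ker(l_0 + \epsilon l_1)$, where $l_0 \in \Hom_k(A_k, k)$ cuts out $\Fil^1 A_k$ (unique up to $k^{\times}$) and $l_1 \in \Hom_k(A_k, k)$ is free modulo $k \cdot l_0$. By Corollary \ref{cor:k point in special cycle} together with Lemma \ref{lem:two defns of Fil^1 A}, the image of $\Lambda^{\vee}$ in $A_k$ already lies inside $\Fil^1 A_k$, so $l_0$ vanishes on it automatically. The remaining condition becomes: $l_1$ vanishes on the $k$-subspace $V \subseteq \Fil^1 A_k$ spanned by the image of $\Lambda^{\vee}$. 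Hence
\begin{equation*}
\dim_k \mathcal T_x \mathcal N_{\Lambda, k} \;=\; (n-1) - \dim_k V,
\end{equation*}
so the problem reduces to showing $\dim_k V = (n-1) - d$.

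For this, Corollary \ref{cor:k point in special cycle} gives $\Lambda^{\vee} \subseteq A^{\vee}$, and dualizing gives $A \subseteq \Lambda_W$; combined with the vertex lattice condition $p\Lambda_W \subseteq \Lambda^{\vee}_W$, this yields $pA \subseteq \Lambda^{\vee}_W$. Consequently $V = \Lambda^{\vee}_W / pA$ fits into the chain of $W$-lattices
\begin{equation*}
pA \;\subseteq\; \Lambda^{\vee}_W \;\subseteq\; A^{\vee} \;\subseteq\; A
\end{equation*}
inside $N_0$. Since $\dim_k A/pA = n$ and $\dim_k A/A^{\vee} = 1$, the claim reduces to the identity $\dim_k A^{\vee}/\Lambda^{\vee}_W = d$. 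This last identity, which is the main point, we establish by checking that $A^{\vee}/\Lambda^{\vee}_W$ is exactly the orthogonal complement of $A/\Lambda^{\vee}_W$ inside the non-degenerate $\sigma$-Hermitian $k$-space $\Omega(\Lambda) = \Lambda_W/\Lambda^{\vee}_W$ of dimension $t$; this is a direct unwinding of the definition of the pairing $(\cdot,\cdot)$ on $\Omega(\Lambda)$ in \S \ref{subsec:vertex lattices} together with the defining property of $A^{\vee}$ (namely that $\{A^{\vee}, A\} \subseteq W$). Combining $\dim_k A/\Lambda^{\vee}_W + \dim_k A^{\vee}/\Lambda^{\vee}_W = t$ with $\dim_k A/\Lambda^{\vee}_W - \dim_k A^{\vee}/\Lambda^{\vee}_W = 1$ then forces $\dim_k A^{\vee}/\Lambda^{\vee}_W = (t-1)/2 = d$, completing the computation.
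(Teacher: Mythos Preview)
Your proof is correct and follows essentially the same approach as the paper. Both arguments apply Theorem~\ref{thm:analogue of MP} to reduce the tangent space computation to counting hyperplanes in $A_k$ containing the image $B$ of $\Lambda^{\vee}$, and then determine $\dim_k B$ (equivalently $\dim_k A_k/B$) via the $\sigma$-linear duality between $A/\Lambda^{\vee}_W$ and $\Lambda_W/A^{\vee}$ inside $\Omega(\Lambda)$, together with $\dim_k A/A^{\vee}=1$. The only difference is presentational: the paper packages the first step as an abstract comparison of tangent spaces through functors on $\mathscr C_k$ and then reads off the answer from the Grassmannian $\Gr_{n-1}(A_k/B)$, whereas you work directly with $\oo=k[\epsilon]/\epsilon^2$ and parametrize lifts by linear functionals $l_0+\epsilon l_1$.
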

\begin{proof}
This can be deduced from Theorem \ref{thm:analogue of MP} elementarily, in the same way as in \cite[\S 4.2]{rzo}. Here we provide a shorter proof. Firstly we make an easy observation. Denote by $\mathscr C_k$ the full subcategory of $\mathscr C$ consisting of characteristic $p$ objects. Let $\mathcal W_1, \mathcal W_2$ be two formal schemes over $k$. Fix $y_i \in \mathcal W_i (k), i= 1,2$. For $i=1,2$, define the set-valued functor $\mathcal F_i$ on $\mathscr C_k $ sending $\oo$ to the set of $\oo$-points of $\mathcal W_i$ which induce $y_i$ under the structure map $\oo \to k$. Assume $\mathcal F_1 \cong \mathcal F_2$. Then the tangent spaces $\mathcal T_{x_i} \mathcal W_i$ are isomorphic. In fact, this observation is a direct consequence of the definition of the vector space structure on the tangent spaces from the point of view of functor of points, as recalled in the proof of \cite[Lemma 4.2.6]{rzo} for instance. 

Denote by $B$ the $k$-subspace of $A_k$ spanned by the image of $\Lambda^{\vee} $ in $A_k$. 
Consider the Grassmannian $\Gr_{n-1} (A_k)$ parametrizing hyperplanes in the $n$-dimensional $k$-vector space $A_k$. Let $\mathcal W_1$ be the  sub-variety of $\Gr_{n-1} (A_k )$ defined by the condition that the hyperplane should contain $B$, and let $y_1\in \mathcal W_1(k)$ corresponding to $\Fil ^1 (A_k )\subseteq A_k$. Let $\mathcal W_2: = \RZ_{\Lambda, k}$ and $y_2: = x$. By Theorem \ref{thm:analogue of MP}, the assumption on $(\mathcal W_i ,y_i), i =1,2$ in the previous paragraph is satisfied. Hence it suffices to compute the dimension of $\mathcal T_{y_1} \mathcal W_1$. Note that $\mathcal W_1$ is itself a Grassmannian, parametrizing hyperplanes in $A_k / B$. The proof is finished once we know that $A_k/B $ has $k$-dimension $(t+1)/2$. But this is true by the ($\sigma$-linear) duality between the $k$-vector spaces $ A_k/ B = A/ \Lambda^{\vee}_W$ and $\Lambda_W/ A^{\vee}$ under the $\sigma$-sesquilinear form on $\Omega(\Lambda)$ obtained by extension of scalars from the $\sigma$-Hermitian form $(\cdot, \cdot )$ on $\Omega_0(\Lambda)$ (cf. \S \ref{subsec:vertex lattices}) and the fact that $A/ A^{\vee}$ is a $1$-dimensional $k$-vector space (cf. Definition \ref{defn:special lattice}). 
\end{proof}
In the following corollary we re-prove \cite[Theorems 9.4, 10.1]{RTZ}. 

\begin{cor}\label{cor:reducedness}
	Let $\Lambda$ be a vertex lattice. Then $\mathcal N_{\Lambda} = \mathcal N_{\Lambda, k} = \mathcal N_{\Lambda} ^{\mathrm{red}}$ and it is regular.
\end{cor}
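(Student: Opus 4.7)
My plan is to establish the triple equality $\mathcal N_{\Lambda} = \mathcal N_{\Lambda, k} = \mathcal N_{\Lambda}^{\mathrm{red}}$ together with regularity in two stages. First, I would show that the special fiber $\mathcal N_{\Lambda, k}$ is regular, which automatically gives $\mathcal N_{\Lambda, k} = \mathcal N_{\Lambda}^{\mathrm{red}}$. Second, I would bootstrap this from the special fiber to $\mathcal N_{\Lambda}$ itself using the absence of $W/p^2$-points.

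For the first stage, fix a closed point $x \in \mathcal N_{\Lambda}(k)$. By the identification (\ref{eq:N_Lambda and V_Lambda}), $\mathcal N_{\Lambda}^{\mathrm{red}} \cong \mathcal V(\Lambda)$ is smooth of pure dimension $d = (t-1)/2$ over $k$; hence via the closed immersion $\mathcal N_{\Lambda}^{\mathrm{red}} \hookrightarrow \mathcal N_{\Lambda, k}$, the Krull dimension of $\mathcal N_{\Lambda, k}$ at $x$ is at least $d$. On the other hand, Corollary \ref{cor:dim of tangent space} asserts that the embedding dimension of $\mathcal N_{\Lambda, k}$ at $x$ equals $d$. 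Since Krull dimension is always bounded above by embedding dimension, both quantities equal $d$ and the local ring $\mathcal O_{\mathcal N_{\Lambda, k}, x}$ is regular. Running this at every closed point yields regularity, hence reducedness, of $\mathcal N_{\Lambda, k}$, so $\mathcal N_{\Lambda, k} = \mathcal N_{\Lambda}^{\mathrm{red}}$.

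For the second stage, I would invoke the general criterion \cite[Lemma 10.2]{RTZ}: a closed formal subscheme $\mathcal Z$ of a formally smooth formal $W$-scheme whose special fiber $\mathcal Z_k$ is regular and which has no $(W/p^2)$-points must coincide with its special fiber. Applied to $\mathcal Z = \mathcal N_{\Lambda} \hookrightarrow \mathcal N$, the regularity hypothesis was just verified, while the vanishing $\mathcal N_{\Lambda}(W/p^2) = \emptyset$ is Corollary \ref{cor:no mod p^2 points} with $i = 1$. This closes the chain of equalities, and the regularity assertion is subsumed in what was proved above. Since the two corollaries doing the real work are already in hand, I do not expect any substantial obstacle here; the only non-formal input is the tangent-space-versus-Krull-dimension comparison in the first stage, which is automatic once both the sharp upper bound (Corollary \ref{cor:dim of tangent space}) and the sharp lower bound (smoothness of $\mathcal V(\Lambda)$) are in place.
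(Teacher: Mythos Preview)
Your proposal is correct and follows essentially the same approach as the paper: use smoothness of $\mathcal V(\Lambda)$ for the lower bound on Krull dimension, Corollary \ref{cor:dim of tangent space} for the matching upper bound on embedding dimension to get regularity of $\mathcal N_{\Lambda,k}$, and then invoke the general criterion from \cite{RTZ} together with Corollary \ref{cor:no mod p^2 points} to conclude $\mathcal N_{\Lambda} = \mathcal N_{\Lambda,k}$. One minor point: the general criterion is \cite[Lemma 10.3]{RTZ}, not Lemma 10.2.
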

\begin{proof}Let $t$ be the type of $\Lambda$. Recall from \S \ref{subsec:str of red scheme} that $\mathcal N_{\Lambda} ^{\mathrm{red} }$ is a smooth $k$-scheme of dimension $(t-1)/2$. By Corollary \ref{cor:dim of tangent space}, all the tangent spaces of $\mathcal N_{\Lambda ,k}$ have $k$-dimension $(t-1)/2$, and so $\mathcal N_{\Lambda ,k}$ is regular. In particular $\mathcal N_{\Lambda ,k}$ is reduced, namely  $\mathcal N_{\Lambda ,k} = \mathcal N_{\Lambda} ^{\mathrm{red}}$. Knowing that $\mathcal N_{\Lambda ,k}$ is regular, and that $\RZ_{\Lambda}$ has no ($W/p^2$)-points (Corollary \ref{cor:no mod p^2 points}), it follows that $\mathcal N_{\Lambda } = \mathcal N_{\Lambda ,k}$ by the general criterion \cite[Lemma 10.3]{RTZ}. 
\end{proof}
\section{The intersection length formula}\label{sec:inters-length-form}
\subsection{The arithmetic intersection as a fixed point scheme}\label{sec:arithm-inters-as}
Fix a regular semisimple and minuscule element $g\in J(\QQ_p)$. Let $L: = L(\mathbf{ v} (g))$ and $\Lambda : = L^{\vee}$. They are both $\oo_E$-lattices in $C$. Recall from the end of \S \ref{subsec:special cycles} that $\Lambda$ is a vertex lattice and $\mathcal Z (L) = \mathcal N_ { \Lambda}$. From now on we assume $\mathcal N^g (k)\neq \emptyset$. As shown in \cite[\S 5]{RTZ}, this assumption implies that both $L$ and $\Lambda$ are $g$-cyclic and stable under $g$. In particular, the natural action of $g$ on $\RZ$ stabilizes $\RZ_{\Lambda}$.  

Let $\Omega _0 = \Omega_0 (\Lambda)$ and $\Omega = \Omega (\Lambda)$. Let $t = t_{\Lambda}$ and $d = (t-1)/2$ as in \S \ref{subsec:V(Lambda)}. Let $\bar g\in \GL(\Omega_0)(\FF_{p^2})$ be the induced action of $g$ on $\Omega_0 $. Then $\bar g$ preserves the Hermitian form $(\cdot , \cdot )$ on $\Omega_0$ and hence acts on $\mathcal V(\Lambda)$. It is clear from the definition of the isomorphism (\ref{eq:N_Lambda and V_Lambda}) given in \cite[\S 4]{Vollaard2011} that it is equivariant for the actions of $g$ and $\bar g$ on the two sides.

\begin{rem}\label{rem:unique block}
Since both $\Lambda$ and $\Lambda^{\vee}$ are $g$-cyclic, the linear operator $\bar g \in \GL(\Omega_0) (\FF_{p^2})$ has equal minimal polynomial and characteristic polynomial. Equivalently, in the Jordan normal form of $\bar g$ (over $k$) there is a unique Jordan block associated to any eigenvalue.
\end{rem}

\begin{prop}\label{prop:scheme}
	$\delta (\mathcal M) \cap \mathcal N^g$ is a scheme of characteristic $p$ (i.e. a $k$-scheme) 
        isomorphic to $\mathcal V(\Lambda) ^{\bar g}$. 
\end{prop}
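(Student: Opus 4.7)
The plan is to reduce the proposition to the formal-scheme identity $\delta(\mathcal M) \cap \mathcal N^g = \mathcal N_\Lambda^g$ and then invoke the reducedness of $\mathcal N_\Lambda$ together with the $g$-equivariant isomorphism $\mathcal N_\Lambda \cong \mathcal V(\Lambda)$ of (\ref{eq:N_Lambda and V_Lambda}). The conceptual point is that for a $g$-fixed test point, any single special homomorphism which lifts propagates (via the automorphism coming from $g$-fixedness) to lifts of its entire $g$-orbit, and the $g$-orbit of $u$ generates $L$ because $g$ is regular semisimple.

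First I would check the functorial identity $\delta(\mathcal M) \cap \mathcal N^g = \mathcal N_\Lambda^g$ on $T$-points for any $T \in \mathrm{Nilp}_W$. The inclusion $\supseteq$ is immediate from $u \in L$, giving $\mathcal N_\Lambda = \mathcal Z(L) \subseteq \mathcal Z(u) = \delta(\mathcal M)$. For $\subseteq$, take $(X, \iota, \lambda, \rho) \in (\delta(\mathcal M) \cap \mathcal N^g)(T)$. Being $g$-fixed translates to the condition that the quasi-isogeny $\alpha := \rho^{-1} \circ g \circ \rho$ lifts to an actual automorphism of $(X, \iota, \lambda)$ over $T$; being in $\delta(\mathcal M) = \mathcal Z(u)$ means $\rho^{-1} \circ u$ is an honest homomorphism $\bar Y_T \to X$. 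Then for each $i \geq 0$, $\rho^{-1} \circ g^i u = \alpha^i \circ (\rho^{-1} \circ u)$ is also an honest homomorphism, so every element of $L = \mathcal O_E \cdot u + \mathcal O_E \cdot gu + \cdots + \mathcal O_E \cdot g^{n-1} u$ lifts. Hence $(X, \iota, \lambda, \rho) \in \mathcal Z(L)(T) = \mathcal N_\Lambda(T)$.

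Next, by Corollary \ref{cor:reducedness}, $\mathcal N_\Lambda = \mathcal N_{\Lambda, k}$ is a reduced $k$-scheme, so $\mathcal N_\Lambda^g$ is automatically a $k$-scheme, which handles the ``characteristic $p$'' assertion. Finally, the isomorphism (\ref{eq:N_Lambda and V_Lambda}) from \cite{Vollaard2011} is constructed via the Dieudonné module, hence is manifestly $g$-equivariant with $g$ acting as $\bar g$ on $\mathcal V(\Lambda)$. Taking fixed loci yields $\mathcal N_\Lambda^g \cong \mathcal V(\Lambda)^{\bar g}$, completing the identification.

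The main point requiring care is the translation of the $g$-fixed condition on a general $T$-point into the existence of an automorphism $\alpha$ satisfying $\rho \circ \alpha = g \circ \rho$; this follows from the definition of the $J(\mathbb Q_p)$-action on $\mathcal N$ (twisting the framing by $g$) combined with rigidity of quasi-isogenies, which ensures such an $\alpha$ is unique when it exists. Given that, the argument is essentially formal: the regular semisimple hypothesis furnishes the $\mathcal O_E$-generation of $L$ by $\{g^i u\}$, and the cumulative results of Section 2 together with Corollary \ref{cor:reducedness} do the rest of the work.
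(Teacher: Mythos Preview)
Your proof is correct and follows essentially the same route as the paper: you establish $\delta(\mathcal M)\cap \mathcal N^g = \mathcal Z(L)^g = \mathcal N_\Lambda^g$ (spelling out more explicitly than the paper does why $g$-fixedness propagates the lifting of $u$ to that of each $g^i u$), then invoke Corollary~\ref{cor:reducedness} and the $g$-equivariance of (\ref{eq:N_Lambda and V_Lambda}). The paper's proof is more terse but relies on exactly the same ingredients; your added discussion of the automorphism $\alpha$ and rigidity is a helpful justification of the step the paper leaves implicit.
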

\begin{proof}
	Recall from \S \ref{subsec:intersection problem} that $\delta(\mathcal M) = \mathcal Z(u)$. Since the $\oo_E$-module $L$ is generated by $u, gu, \cdots, g^{n-1} u$ and stable under $g$, we have $\delta (\mathcal M) \cap \RZ^g = \mathcal Z(L)^g = \RZ_{\Lambda}^g$. By Corollary \ref{cor:reducedness}, we know that $\RZ_{\Lambda}^g = (\RZ_{\Lambda} ^{\mathrm{red}})^g$ . But the latter is isomorphic to the characteristic $p$ scheme $\mathcal V(\Lambda) ^{\bar g}$ under (\ref{eq:N_Lambda and V_Lambda}). 
\end{proof}

\subsection{Study of $\mathcal V(\Lambda) ^{\bar g}$}\label{sec:study-mathc-vlambda}
We start with an alternative moduli interpretation of $\mathcal V(\Omega_0)$. The idea is to rewrite (in Lemma \ref{lem:alternative description}) the procedure of taking the complement $U \mapsto U^{\perp}$ with respect to the Hermitian form, in terms of taking the complement with respect to some quadratic form and taking a Frobenius. The alternative moduli interpretation is given in Corollary \ref{cor:alternative description} below. 

Let $\Theta_0$ be a $t$-dimensional non-degenerate quadratic space over $\mathbb{F}_p$. 
Let $[\cdot,\cdot]: \Theta_0\times \Theta_0 \to \FF_p$ be  the associated bilinear form. Since there is a unique isomorphism class of non-degenerate $\sigma$-Hermitian spaces over $\FF_{p^2}$, we may assume that $\Omega_0 = \Theta_0 \otimes _{\FF_p} \FF_{p^2}$ and that the $\sigma$-Hermitian form $(\cdot, \cdot)$ (cf. \S \ref{subsec:V(Lambda)}) is obtained by extension of scalars (linearly in the first variable and $\sigma$-linearly in the second variable) from $[\cdot,\cdot]$.


\begin{defn}\label{def:sigmastar}
Let $R$ be an $\FF_p$-algebra. We define $[\cdot,\cdot]_R$ to be the $R$-bilinear form on $\Theta_0 \otimes_{\FF_p} R$ obtained from $[\cdot, \cdot]$ by extension of scalars. For any $R$-submodule $\LL \subset\Theta_0 \otimes_{\FF_p} R$, define $$\LL^{\lin \perp}: = \set{v\in \Theta_0 \otimes _{\FF_p} R :  [v,l] _R =0 ,~ \forall l\in \mathcal L}. $$ Define $\sigma_* (\LL)$ to be the $R$-module generated by the image of $\LL $ under the map $$ \sigma : \Theta_0 \otimes _{\FF_p} R \to \Theta_0 \otimes _{\FF_p} R,  \quad v\otimes r \mapsto v\otimes r^p. $$   
\end{defn}

Let $R$ be an $\FF_{p^2}$-algebra. Let $U $ be an $R$-submodule of $\Omega_0 \otimes _{\FF_{p^2}} R$. Since $\Omega_0 \otimes _{\FF_{p^2} } R = \Theta_0 \otimes_{\FF_p} R$, we may view $U$ as an $R$-submodule of the latter and define $\sigma_*(U)$ as in Definition~\ref{def:sigmastar}.

\begin{lem}\label{lem:alternative description} We have $
	U^{\perp} =   (\sigma_* (U) ) ^{\lin \perp}.
$
\end{lem}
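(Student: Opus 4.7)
The plan is to reduce everything to a single pointwise identity between the two pairings and then unwind the definitions of $U^{\perp}$ and $(\sigma_*(U))^{\lin\perp}$.

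First I would record the explicit formulas for the two forms on simple tensors. Writing elements of $\Omega_0 \otimes_{\FF_{p^2}} R = \Theta_0 \otimes_{\FF_p} R$ as $v \otimes r$ with $v \in \Theta_0$, $r \in R$, the fact that $(\cdot,\cdot)$ is obtained from $[\cdot,\cdot]$ by extending linearly in the first variable and $\sigma$-linearly in the second gives
\begin{equation*}
(v_1 \otimes r_1,\, v_2 \otimes r_2) \;=\; [v_1,v_2]\cdot r_1 \cdot r_2^{p},
\end{equation*}
while $[\cdot,\cdot]_R$ is simply $R$-bilinear, giving $[v_1\otimes r_1, v_2\otimes r_2]_R = [v_1,v_2]\cdot r_1 \cdot r_2$. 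Since $\sigma(v_2 \otimes r_2) = v_2 \otimes r_2^p$, comparing these two formulas yields the key identity
\begin{equation*}
(x,\, y) \;=\; [x,\, \sigma(y)]_R \qquad \text{for all } x,y \in \Theta_0 \otimes_{\FF_p} R,
\end{equation*}
which I would verify first on simple tensors and then extend by linearity (in both slots, since $[\cdot,\cdot]_R$ is $R$-bilinear and $\sigma$ is $\FF_p$-linear).

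Given this identity, the lemma is immediate. An element $v$ lies in $U^{\perp}$ iff $(v,u) = 0$ for all $u \in U$, which by the identity means $[v, \sigma(u)]_R = 0$ for all $u \in U$. Since $[\cdot,\cdot]_R$ is $R$-bilinear and $\sigma_*(U)$ is by definition the $R$-submodule generated by $\sigma(U)$, this condition is equivalent to $[v, \ell]_R = 0$ for all $\ell \in \sigma_*(U)$, i.e.\ $v \in (\sigma_*(U))^{\lin\perp}$. This gives both inclusions simultaneously.

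There is no serious obstacle; the only point that might momentarily confuse the reader is that $\sigma$ is only $\FF_p$-linear (not $R$-linear or even $\FF_{p^2}$-linear), so one must be careful that the key identity $(x,y) = [x,\sigma(y)]_R$ is being asserted with $x$ varying $R$-linearly and $y$ varying $\FF_p$-linearly (which is exactly the scalar-behavior on each side). I would add a brief sentence noting this, so that the passage from ``$[v,\sigma(u)]_R = 0$ for all $u\in U$'' to ``$[v,\ell]_R = 0$ for all $\ell\in\sigma_*(U)$'' is transparent: it uses $R$-linearity in the first variable of $[\cdot,\cdot]_R$ to pass from a spanning set $\sigma(U)$ to the $R$-span $\sigma_*(U)$.
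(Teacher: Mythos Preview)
Your proof is correct and follows essentially the same approach as the paper: both establish a pointwise identity relating $(\cdot,\cdot)_R$ to $[\cdot,\cdot]_R$ via $\sigma$ (the paper writes $(y,x)_R = [\sigma(x),y]_R$, you write $(x,y) = [x,\sigma(y)]_R$, which are equivalent by the symmetry of $[\cdot,\cdot]$), and then unwind the definitions of the two orthogonal complements. One trivial slip: in your final sentence the passage from $\sigma(U)$ to $\sigma_*(U)$ uses $R$-linearity of $[\cdot,\cdot]_R$ in the \emph{second} variable, not the first.
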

\begin{proof}
	Consider two arbitrary elements $$x = \sum _j u_j \otimes r_j ,\quad y = \sum _k v_k \otimes s_k$$ of $\Theta_0 \otimes_{\FF_p} R$. We have 
	$$ (y,x) _R =  \sum_{j,k} s_k r_j^p \cdot  [v_k , u_j ] _R =  \sum_{j,k} s_k r_j^p \cdot  [u_j , v_k ] _R  =  [ \sigma(x), y]_R.$$ Hence for $y\in \Theta_0 \otimes_{\FF_p} R$, we have $y \in U^{\perp}$ if and only if $(y,x )_R = 0$ for all $x\in U$, if and only if $[\sigma(x), y]_R =0$ for all $x\in U$, if and only if $y \in (\sigma_*(U) )^{\lin \perp}$.
\end{proof} 
\begin{cor}\label{cor:alternative description}
	For any $\FF_{p^2}$-algebra $R$, the set $\mathcal V(\Omega_0) (R)$ is equal to the set of $R$-submodules $U$ of $$\Omega_0 \otimes_{\FF_{p^2}} R =  \Theta_0\otimes_{\FF_{p}} R, $$ such that $U$ is an  $R$-module local direct summand of rank $d+1$, satisfying$$(\sigma_*(U)) ^{\lin \perp}\subseteq U.$$

\end{cor}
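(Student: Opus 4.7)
The plan is to observe that this corollary is an essentially immediate consequence of Lemma \ref{lem:alternative description} combined with the original definition of $\mathcal V(\Omega_0)(R)$ from (\ref{eq:defn of V_Lambda}); there is no genuine content beyond what has already been set up.

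First, I would unfold the definition of $\mathcal V(\Omega_0)(R)$ given in (\ref{eq:defn of V_Lambda}), which describes it as the set of rank $d+1$ local direct summand $R$-submodules $U \subseteq \Omega_0 \otimes_{\FF_{p^2}} R$ satisfying $U^{\perp} \subseteq U$, where $U^{\perp}$ is defined using the $R$-sesquilinear extension $(\cdot,\cdot)_R$ of the $\sigma$-Hermitian form on $\Omega_0$.

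Next, I would explicitly record the identification of ambient modules: since $\Omega_0 = \Theta_0 \otimes_{\FF_p} \FF_{p^2}$ by the paragraph preceding Definition \ref{def:sigmastar}, the associativity of tensor product gives
\[
\Omega_0 \otimes_{\FF_{p^2}} R \;=\; (\Theta_0 \otimes_{\FF_p} \FF_{p^2}) \otimes_{\FF_{p^2}} R \;=\; \Theta_0 \otimes_{\FF_p} R.
\]
Under this identification the operator $\sigma_*$ of Definition \ref{def:sigmastar} is defined on the same ambient module, and the discussion preceding Lemma \ref{lem:alternative description} makes the expression $\sigma_*(U)$ and its linear perp $(\sigma_*(U))^{\lin\perp}$ meaningful.

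Finally, I would apply Lemma \ref{lem:alternative description}, which asserts exactly the equality $U^{\perp} = (\sigma_*(U))^{\lin\perp}$. Substituting this into the condition $U^{\perp} \subseteq U$ in the definition of $\mathcal V(\Omega_0)(R)$ immediately yields the equivalent condition $(\sigma_*(U))^{\lin\perp} \subseteq U$, which is the description claimed in the corollary. There is no obstacle to overcome; the corollary is really just a bookkeeping repackaging of Lemma \ref{lem:alternative description}, useful because it will let subsequent sections analyze the fixed-point scheme $\mathcal V(\Lambda)^{\bar g}$ using a quadratic form on the $\FF_p$-structure $\Theta_0$ together with a Frobenius, reducing the problem to a more elementary computation on a projective space.
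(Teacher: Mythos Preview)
Your proposal is correct and takes essentially the same approach as the paper, which simply states that the corollary is a direct consequence of (\ref{eq:defn of V_Lambda}) and Lemma \ref{lem:alternative description}. You have merely spelled out in more detail what the paper records in a single sentence.
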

\begin{proof}
	This is a direct consequence of (\ref{eq:defn of V_Lambda}) and Lemma \ref{lem:alternative description}. 
 \end{proof}

 In the following we denote $\mathcal V(\Lambda) $ by $\mathcal V$ for simplicity, where $\Lambda$ is always fixed as in the beginning of \S \ref{sec:arithm-inters-as}. 
 Denote $\Theta:  = \Theta_0 \otimes_{\FF_p} k$. Fix a point $x_0 \in \mathcal V ^{\bar g} (k)$. Let $U_0$ correspond to $x_0$ under (\ref{eq:defn of V_Lambda}) or Corollary \ref{cor:alternative description}. Define $$\LL_{d+1}: = U_0, \quad  \LL_d: = (\sigma_*(U_0)) ^{\lin\perp} \xlongequal
 {\mbox{Lemma \ref{lem:alternative description}}} U_0^{\perp}. $$ They are subspaces of $\Theta$ stable under $\bar g$, of $k$-dimensions $d+1$ and $d$ respectively.

 \begin{defn}\label{eq:defn of I}
   Define $\mathcal I: = \mathbb P (\Theta/\LL_d)$, a projective space of dimension $d$ over $k$.
 \end{defn}
 
  Then $ \LL_{d+1}$ defines an element in $\mathcal I(k)$, which we still denote by $x_0$ by abuse of notation. We have a natural action of $\bar g$ on $\mathcal I$ that fixes $x_0$. Let $\mathcal R_p$ (resp. $\mathcal S_p$) be the quotient of the local ring of $\mathcal I^{\bar g}$ (resp. of $\mathcal V^{\bar g}$) at $x_0$ divided by the $p$-th power of its maximal ideal. 
\begin{lem}\label{lem:local model}
	There is a $k$-algebra isomorphism $\mathcal R_p \cong \mathcal S_p$.
\end{lem}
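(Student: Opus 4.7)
The plan is to realize both $\hat{\mathcal V}_{x_0}$ and $\hat{\mathcal I}_{x_0}$ as closed formal subschemes of the formal completion of the ambient Grassmannian $\Gr_{d+1}(\Theta)$ at $x_0$, and to show that modulo the $p$-th power of the maximal ideal these two formal subschemes coincide in a $\bar g$-equivariant fashion; the desired isomorphism $\mathcal{R}_p \cong \mathcal{S}_p$ will then follow by taking $\bar g$-fixed points. To set up, I would use Corollary \ref{cor:alternative description} to identify $\hat{\mathcal V}_{x_0}(R)$, for a local Artinian $k$-algebra $R$ with maximal ideal $\mathfrak n$, with the set of rank $d+1$ local direct summands $U \subseteq \Theta_0 \otimes_{\FF_p} R$ deforming $U_0$ and satisfying $(\sigma_*(U))^{\lin\perp} \subseteq U$. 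On the other hand, the linear embedding $\mathcal I = \mathbb P(\Theta/\mathcal L_d) \hookrightarrow \Gr_{d+1}(\Theta)$, sending a line $\ell \subset \Theta/\mathcal L_d$ to its preimage in $\Theta$, identifies $\hat{\mathcal I}_{x_0}(R)$ with the set of rank $d+1$ deformations of $U_0$ containing $\mathcal L_d \otimes_k R$.

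The key calculation is that when $\mathfrak n^p = 0$ these two defining conditions on $U$ are equivalent. For such $R$, the $p$-th power map $R \to R$ factors through $R \twoheadrightarrow k \hookrightarrow R$: writing $r = r_0 + n$ with $r_0 \in k$ and $n \in \mathfrak n$ gives $r^p = r_0^p + n^p = r_0^p$. Applied termwise in $\Theta_0 \otimes_{\FF_p} R$, this forces $\sigma(U) = \sigma(U_0)$ as subsets of $\Theta$, and hence $\sigma_*(U) = \sigma_*(U_0) \otimes_k R$ as $R$-submodules. Taking linear complements and using $\mathcal L_d = (\sigma_*(U_0))^{\lin\perp}$ yields $(\sigma_*(U))^{\lin\perp} = \mathcal L_d \otimes_k R$, so the condition $(\sigma_*(U))^{\lin\perp} \subseteq U$ becomes $\mathcal L_d \otimes_k R \subseteq U$. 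By Yoneda, this identifies $\hat{\mathcal V}_{x_0}$ and $\hat{\mathcal I}_{x_0}$ as closed formal subschemes modulo the $p$-th power of the maximal ideal.

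Finally, I would observe that the identification is $\bar g$-equivariant, since $\bar g$ stabilizes both $U_0$ (because $x_0$ is a $\bar g$-fixed point) and $\mathcal L_d = U_0^{\perp}$ (because $\bar g$ preserves the Hermitian form). The fixed-point construction $X \mapsto X^{\bar g}$ commutes with passage to $R$-points, so we obtain a natural bijection $\hat{\mathcal V}^{\bar g}_{x_0}(R) \cong \hat{\mathcal I}^{\bar g}_{x_0}(R)$ for all $R$ with $\mathfrak n^p = 0$, and since both $\mathcal S_p$ and $\mathcal R_p$ satisfy $\mathfrak m^p = 0$, Yoneda yields $\mathcal S_p \cong \mathcal R_p$. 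I expect the main technical point to be verifying cleanly that a nilpotent deformation $U$ of $U_0$ really does not affect $\sigma_*(U)$ modulo $\mathfrak n^p$; this is a direct consequence of the $\FF_p$-linearity of $\sigma$ together with the hypothesis $\mathfrak n^p = 0$, but must be phrased consistently with the definitions of $\sigma_*$ and $[\cdot,\cdot]_R$ from \S \ref{subsec:V(Lambda)} and Definition \ref{def:sigmastar}, and with the duality between the Hermitian form $(\cdot,\cdot)$ and the bilinear form $[\cdot,\cdot]$ used in Lemma \ref{lem:alternative description}.
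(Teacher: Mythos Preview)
Your proposal is correct and follows essentially the same approach as the paper: both arguments use Corollary~\ref{cor:alternative description} to describe the $R$-points of $\mathcal V$ lifting $x_0$, make the key observation that $\sigma_*(U) = \sigma_*(U_0)\otimes_k R$ whenever $\mathfrak n^p = 0$ (so that $(\sigma_*(U))^{\lin\perp} = \mathcal L_d \otimes_k R$), and then pass to $\bar g$-fixed points via equivariance. The only cosmetic difference is that the paper first constructs the isomorphism $\tilde{\mathcal R}_p \cong \tilde{\mathcal S}_p$ between the non-equivariant truncated local rings and then quotients by the augmentation ideal for the $\bar g$-action, whereas you phrase the final step in terms of fixed-point functors and Yoneda; these are equivalent.
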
 
\begin{proof}
		The proof is based on exactly the same idea as \cite[Lemma 5.2.9]{rzo}.
	Let $\tilde {\mathcal R}_p$ (resp. $\tilde {\mathcal S}_p$) be the quotient of the local ring of $\mathcal I$ (resp. of $\mathcal V$) at $x_0$ divided by the $p$-th power of its maximal ideal. 
 Let $R$ be an arbitrary local $k$-algebra with residue field $k$ such that the $p$-th power of its maximal ideal is zero. Then by Lemma \ref{lem:alternative description}, the $R$-points of $\mathcal V$ lifting $x_0$ classify $R$-module local direct summands $U$ of $\Theta \otimes _k R$ of rank $d+1$ that lift $\LL_{d+1}$, and such that $$ U\supseteq (\sigma_*(U)) ^{\lin \perp}. $$ But by the assumption that the $p$-th power of the maximal ideal of $R$ is zero, we have $$\sigma_{R,*} (U) = (\sigma_{k,*} (\LL_{d+1})) \otimes _ k R ,$$ where we have written $\sigma_R$ and $\sigma_k$ to distinguish between the Frobenius on $R$ and on $k$. Therefore 
 $$  (\sigma_{R,*}(U)) ^{\lin \perp} = \big ( (\sigma_{k,*} (\LL_{d+1})) \otimes _ k R \big) ^{\lin \perp} =  (\sigma_{k,*} (\LL_{d+1}))^{\lin \perp} \otimes _ k R = \LL_d \otimes_k R. $$
 Thus we see that the set of $R$-points of $\mathcal V$ lifting $x_0$ is in canonical bijection with the set of $R$-points of $\mathcal I$ lifting $x_0$. We thus obtain a canonical $\tilde {\mathcal R}_{p}$-point of $\mathcal V$ lifting $x_ 0 \in \mathcal V (k)$, and a canonical $\tilde {\mathcal S}_p$-point of $\mathcal I$ lifting $x_0 \in \mathcal I (k)$. These two points induce maps $\tilde{\mathcal S}_p \to \tilde{\mathcal R}_p$ and $\tilde{\mathcal R}_p \to \tilde{\mathcal S}_p$ respectively. From the moduli interpretation of these two maps we see that they are $k$-algebra homomorphisms inverse to each other and equivariant with respect to the actions of $\bar g$ on both sides. Note that $\mathcal S_p$ (resp. $\mathcal R_p$) is the quotient of $\tilde {\mathcal S}_p$ (resp. $\tilde{\mathcal R}_p$) by the augmentation ideal for the $\bar g$-action. It follows that $\mathcal R_p \cong \mathcal S_p$.   
 \end{proof}
\subsection{Study of $\mathcal I^{\bar g}$}\label{sec:study-mathcal-ibar}

\begin{defn} \label{defn:lambda}Let $\lambda$ be the eigenvalue of $\bar g$ on the $1$-dimensional $k$-vector space $\LL_{d+1}/\LL_d = U_0/ U_0^{\perp}$, and let $c$ be the size of the unique (cf. Remark \ref{rem:unique block}) Jordan block of $\bar g|_{\LL_{d+1}}$ associated to $\lambda$.  Notice our $c$ is denoted by $c+1$ in \cite[\S 9]{RTZ}.
\end{defn}

\begin{rem}\label{rem:lambda and c} By the discussion before \cite[Proposition 9.1]{RTZ}, $c$ is the size of the unique Jordan block associated to $\lambda$ of $\bar g$ on ${\Theta/ \LL_d} = \Omega/U_0^{\perp}$, and is also equal to the quantity $\frac{m(Q(T))+1}{2}$ introduced in the introduction.
\end{rem}
\begin{prop}\label{prop:compute R}
The local ring $\oo_{\mathcal I^{\bar g} ,{x_0}}$ of $\mathcal I^{\bar g}$ at $x_0$ is isomorphic to $k[X]/X^{c}$ as a $k$-algebra.
\end{prop}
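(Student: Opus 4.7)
The plan is to reduce the statement to an elementary linear-algebra computation in an affine chart of $\mathcal I = \mathbb P(V)$, where $V := \Theta/\mathcal L_d$ is a $(d+1)$-dimensional $k$-vector space carrying the induced action of $\bar g$. By Remark \ref{rem:lambda and c}, $\bar g$ on $V$ has a unique Jordan block of size $c$ for the eigenvalue $\lambda$, so the generalized $\lambda$-eigenspace $V_\lambda \subseteq V$ has dimension exactly $c$. I would decompose $V = V_\lambda \oplus V'$ into the $\lambda$-generalized eigenspace and its complement (a sum of generalized eigenspaces for eigenvalues $\neq \lambda$), so that $\bar g - \lambda\,\mathrm{id}$ is invertible on $V'$. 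Choose a Jordan basis $e_1,\ldots,e_c$ of $V_\lambda$ with $\bar g e_1 = \lambda e_1$ and $\bar g e_j = \lambda e_j + e_{j-1}$ for $2\le j\le c$, together with any basis $f_1,\ldots,f_{d+1-c}$ of $V'$. The fixed point $x_0$ corresponds to the line $[e_1]$.

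Next, I would work in the standard affine chart of $\mathbb P(V)$ around $x_0$, parametrizing points by coordinates $(y_2,\ldots,y_c,z_1,\ldots,z_{d+1-c})$ via
\[
v = e_1 + \sum_{j=2}^{c} y_j e_j + \sum_{i=1}^{d+1-c} z_i f_i.
\]
Since $\bar g$ is invertible, $\lambda\neq 0$, so $\bar g$ preserves this chart in a neighborhood of the origin. The scheme-theoretic fixed-point locus $\mathcal I^{\bar g}$ is the equalizer of $\mathrm{id}$ and $\bar g$, which in these coordinates is cut out by the equations $\bar g v = \mu v$, where $\mu = \lambda + y_2$ is forced by reading off the $e_1$-coefficient of $\bar g v$. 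Expanding $\bar g v$ using the Jordan basis, the $e_j$-equation for $2\le j\le c-1$ gives $\lambda y_j + y_{j+1} = (\lambda + y_2) y_j$, i.e. the recursion $y_{j+1} = y_2 y_j$, whence $y_j = y_2^{j-1}$; the $e_c$-equation gives $\lambda y_c = (\lambda+y_2) y_c$, i.e. $y_2^c = 0$. The $V'$-equations assemble into the linear system $(\bar g|_{V'} - \lambda - y_2\,\mathrm{id})\cdot z = 0$. The coefficient matrix has invertible constant term by the choice of $V'$, so by Nakayama it is invertible over the local ring at the origin; hence $z_1 = \cdots = z_{d+1-c} = 0$ in that ring.

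After eliminating the dependent variables $y_3,\ldots,y_c$ and $z_1,\ldots,z_{d+1-c}$, the local ring $\oo_{\mathcal I^{\bar g}, x_0}$ reduces to $k[y_2]/(y_2^c) \cong k[X]/X^c$, as desired. I do not expect any serious obstacle: the only point needing care is the translation of the equalizer definition of $\mathcal I^{\bar g}$ into the explicit equations $\bar g v = \mu v$ with $\mu = \lambda + y_2$, and the verification that eliminating the $V'$-coordinates is legitimate in the local ring (rather than globally on the affine chart). Conceptually, this calculation is exactly the "fixed points of a linear operator on $\mathbb P^d$ in terms of its Jordan blocks" that motivates the whole strategy, and it makes transparent why the answer is governed by the single Jordan block of $\bar g$ at $\lambda$.
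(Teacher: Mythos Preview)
Your proposal is correct and follows essentially the same approach as the paper: both reduce Proposition~\ref{prop:compute R} to the elementary computation of the fixed-point scheme of a linear automorphism on $\mathbb P^d$ in an affine chart, using a Jordan basis so that $x_0$ corresponds to the eigenvector line, and obtain the same recursion $y_{j+1}=y_2 y_j$, the relation $y_2^c=0$, and the vanishing of the remaining coordinates. Your only cosmetic difference is that you first split $V=V_\lambda\oplus V'$ and handle the $V'$-coordinates via the invertibility of $\bar g|_{V'}-\lambda-y_2$ in the local ring, whereas the paper treats all coordinates uniformly via the Jordan normal form and the observation that $h_{0,0}-h_{i,i}\neq 0$ for $i\geq c$; this is the same argument packaged slightly differently.
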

\begin{proof}
By Remark \ref{rem:lambda and c} and Definition \ref{eq:defn of I}, the proposition is a consequence of the following general lemma applied to $\LL = \Theta/\LL_d$ and $h = \bar g$.
\end{proof}

\begin{lem}\label{lem:jordanblock}
  Let $\LL$ be a $k$-vector space of dimension $d+1$. Let $\mathbb{P}(\LL)=\mathbb{P}^d$ be the associated projective space. Let $x_0 \in \mathbb{P}(\mathcal{L})(k)$, represented by a vector $\ell\in \LL$. Let $h \in \GL(\LL)(k)=\GL_{d+1}(k)$. Assume that
  \begin{enumerate}
  \item the natural action of $h$ on $\mathbb{P}(\mathcal{L})$ fixes $x_0$. Denote the eigenvalue of $h$ on $\ell$ by $\lambda$.
  \item there is a unique Jordan block of $h$ associated to the eigenvalue $\lambda$. Denote its size by $c$.
  \end{enumerate}
   Let $R:= \oo_{\mathbb{P}(\mathcal{L})^h, x_0}$ be the local ring of the fixed point scheme $\mathbb{P}(\mathcal{L})^h$ at $x_0$. Then $$R\cong k[X]/X^{c}.$$
\end{lem}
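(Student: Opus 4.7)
The plan is to compute the local ring of the fixed point scheme directly in an affine chart around $x_0$ by Jordan-normal-form bookkeeping. Decompose $\LL = \LL_\lambda \oplus \LL'$, where $\LL_\lambda$ is the generalized $\lambda$-eigenspace of $h$ and $\LL'$ is the sum of the generalized eigenspaces for the other eigenvalues. By hypothesis, $\LL_\lambda$ has dimension $c$ and is cyclic under $h - \lambda$, and $\ell$ spans its $1$-dimensional $\lambda$-eigenspace. Choose $w \in \LL_\lambda$ with $(h-\lambda)^{c-1} w = \ell$ and set $e_i := (h-\lambda)^{c-i} w$ for $i = 1, \ldots, c$, so that $e_1 = \ell$ and $h(e_i) = \lambda e_i + e_{i-1}$ (with the convention $e_0 := 0$). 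Fix any basis $f_1, \ldots, f_{d+1-c}$ of $\LL'$ and write $H'$ for the matrix of $h|_{\LL'}$.

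Next I would work in the standard affine chart of $\mathbb{P}(\LL)$ around $x_0 = [e_1]$, with coordinates $(y_2, \ldots, y_c, z_1, \ldots, z_{d+1-c})$ corresponding to the line spanned by $v := e_1 + \sum_{i=2}^{c} y_i e_i + \sum_j z_j f_j$. A direct expansion of $h(v)$ shows that its $e_1$-coefficient equals $\lambda + y_2$, which is a unit in the local ring since $\lambda \neq 0$ (as $h \in \GL(\LL)$). The scheme-theoretic fixed point condition $[h(v)] = [v]$ therefore becomes $h(v) = (\lambda + y_2)\cdot v$, and matching coefficients yields the defining equations
\[
y_{i+1} = y_2 y_i \quad (2 \le i \le c-1), \qquad y_2 y_c = 0, \qquad (H' - (\lambda + y_2) I)\, z = 0
\]
for $\mathbb{P}(\LL)^h$ in this chart.

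The final step is elimination. Because $\lambda$ is not an eigenvalue of $H'$, the polynomial $\det(H' - (\lambda + y_2) I)$ is a unit in the local ring; multiplying the last equation by the adjugate matrix forces $z = 0$. The recursion $y_{i+1} = y_2 y_i$ then gives $y_i = y_2^{i-1}$ for $i = 2, \ldots, c$, after which $y_2 y_c = 0$ collapses to $y_2^c = 0$. The local ring is therefore $k[y_2]/(y_2^c)$, as desired. The only real obstacle is the invertibility step eliminating the $z$-variables, which is precisely where the hypothesis of a \emph{unique} Jordan block for $\lambda$ enters: it ensures that $\lambda$ is not an eigenvalue of $H'$, so that $H' - (\lambda + y_2) I$ is genuinely invertible over the local ring and not merely at the closed point.
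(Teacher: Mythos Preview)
Your proof is correct and follows essentially the same approach as the paper: choose a Jordan-type basis with $\ell$ as the $\lambda$-eigenvector, pass to the affine chart around $x_0$, write down the fixed-point equations, and solve. The only cosmetic difference is that the paper keeps the full Jordan normal form and eliminates the coordinates outside the $\lambda$-block one at a time via the recursion $Z_i \sim Z_{i+1}$, whereas you split off $\LL'$ and kill all those coordinates at once with the adjugate; both arguments use exactly the hypothesis that $\lambda$ is not an eigenvalue of $h|_{\LL'}$.
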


\begin{proof}
  Extend $\ell$ to a basis $\{\ell_0=\ell, \ell_1,\ldots, \ell_d\}$ of $\mathcal{L}$ such that the matrix $(h_{ij})_{0\le i,j\le d}$ of $h$ under this basis is in the Jordan normal form. Under this basis, the point $x_0$ has projective coordinates $[X_0:\cdots:X_d]=[1:0:\cdots:0]\in \mathbb{P}^d$. Let $Z_i=X_i/X_0$ ($1\le i\le d$) and let $\mathbb{A}^d$ be the affine space with  coordinates $(Z_1,\ldots, Z_d)$. Then we can identify the local ring of $\mathbb{P}^d$ at $x_0$ with the local ring of $\mathbb{A}^d$ at the origin. Since $h$ fixes $x_0$, we know that $h$ acts on the local ring of $\mathbb{A}^d$ at the origin (although $h$ does not stabilize $\adele^d$ in general). Since $(h_{ij})$ is in the Jordan normal form, we know that the action of $h$ on the latter is given explicitly by $$h Z_i=\frac{h_{i,i}X_i+h_{i,i+1}X_{i+1}}{h_{0,0}X_0+ h_{0,1}X_1}=\frac{h_{i,i}Z_i+ h_{i, i+1} Z_{i+1}}{h_{0,0} + h_{0,1}Z_1},\ 1\le i\le d,$$ where $h_{i,i+1}Z_{i+1}$ is understood as 0 when $i=d$. Hence the local equations at the origin of $\adele^d$ which cut out the $h$-fixed point scheme are given by $$(h_{0,0}-h_{i,i}) Z_i+ h_{0,1}Z_1 Z_i=h_{i,i+1}Z_{i+1},\ 1\leq i \leq d.$$  By hypothesis (2), we have $h_{0,0}-h_{i,i}\ne 0$ if and only if $i\ge c$. Thus when $i\ge c$, we know that $(h_{0,0}-h_{i,i})+h_{0,1}Z_1$ is a unit in the local ring of $\mathbb{A}^d$ at the origin, and so $Z_{i}$ can be solved as a multiple of $h_{i,i+1}Z_{i+1}$ when $i\ge c$. It follows that $$\quad Z_{i}=0,\ i\ge c.$$ If $c=1$, then $Z_1=\cdots =Z_d=0$ and the local ring $R$ in question is isomorphic to $k$ as desired. If $c>1$, then $h_{0,1}=1$ and we find the equations for $i=1,\cdots, c-1$ simplify to $$Z_1Z_1=Z_2,Z_1 Z_2=Z_3,\cdots, Z_1Z_{c-2}=Z_{c-1},Z_1 Z_{c-1}=0.$$ Hence the local ring $R$ in question is isomorphic to (the localization at the ideal $(Z_1, Z_2,\cdots, Z_{c-1})$ or $(Z_1)$ of) $$k[Z_1,Z_2,\ldots, Z_{c-1}]/(Z_1^2-Z_2, Z_1^3-Z_3, \cdots, Z_1^{c-1}-Z_{c-1}, Z_1^c)\cong k[Z_1]/Z_1^c,$$ as desired. 
\end{proof}

\begin{thm}\label{thm:finalA}
	Let $g \in J(\QQ_p)$ be regular semisimple and minuscule. Let $x_0 \in (\delta(\mathcal M)\cap \RZ^g)(k)$. Also denote by $x_0$ the image of $x_0$ in $\mathcal V(\Lambda) (k)$ as in Proposition \ref{prop:scheme} and define $\lambda, c$ as in Definition \ref{defn:lambda}. Assume $p>c$. Then the complete local ring of $\delta(\mathcal M)\cap \RZ^g$ at $x_0$ is isomorphic to $k[X]/ X^{c}$. 
\end{thm}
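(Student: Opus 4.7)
The plan is to assemble the three immediately preceding results and then bootstrap from a truncated statement to a full one. First, Proposition~\ref{prop:scheme} identifies the scheme $\delta(\mathcal M)\cap \mathcal N^g$ with $\mathcal V(\Lambda)^{\bar g}$, so the complete local ring in question is simply $\hat{\mathcal S}$, the completion at $x_0$ of $\mathcal S := \oo_{\mathcal V(\Lambda)^{\bar g}, x_0}$. Write $\mathfrak m$ for its maximal ideal.

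Next, I will combine Lemma~\ref{lem:local model} with Proposition~\ref{prop:compute R}. The former produces a $k$-algebra isomorphism $\hat{\mathcal S}/\mathfrak m^p \cong \mathcal R_p = \oo_{\mathcal I^{\bar g}, x_0}/\mathfrak n^p$ (with $\mathfrak n$ the maximal ideal on the right-hand side), while the latter identifies $\oo_{\mathcal I^{\bar g}, x_0}$ with $k[X]/X^{c}$. The hypothesis $p>c$ forces $\mathfrak n^p = (X)^p = 0$ inside $k[X]/X^{c}$, so no truncation actually occurs on the $\mathcal I^{\bar g}$ side: $\mathcal R_p \cong k[X]/X^{c}$. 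Hence $\hat{\mathcal S}/\mathfrak m^p \cong k[X]/X^{c}$.

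It remains to promote this mod-$\mathfrak m^p$ statement to an isomorphism for $\hat{\mathcal S}$ itself. From $\hat{\mathcal S}/\mathfrak m^p \cong k[X]/X^{c}$ one reads off that $\mathfrak m^{c} \subseteq \mathfrak m^p$, since $X^{c}$ vanishes on the right-hand side. On the other hand the assumption $p>c$ gives $\mathfrak m^p \subseteq \mathfrak m^{c+1}\subseteq \mathfrak m^{c}$. Therefore $\mathfrak m^{c} = \mathfrak m^{c+1}$, and Nakayama's lemma applied to the finitely generated module $\mathfrak m^{c}$ over the Noetherian local ring $\hat{\mathcal S}$ yields $\mathfrak m^{c} = 0$. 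Consequently $\mathfrak m^p = 0$, the surjection $\hat{\mathcal S}\twoheadrightarrow \hat{\mathcal S}/\mathfrak m^p$ is an isomorphism, and composing with the previous identification gives $\hat{\mathcal S}\cong k[X]/X^{c}$.

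I do not anticipate any serious technical obstacle along this route; it is essentially assembly plus a one-line Nakayama argument. The role of the assumption $p>c$ is transparent: it is precisely what guarantees that the first-order-in-$p$ information captured by Lemma~\ref{lem:local model} suffices to reconstruct the entire local ring $k[X]/X^{c}$, and what lets the Nakayama step collapse $\hat{\mathcal S}$ onto its $\mathfrak m^p$-truncation. Lifting this hypothesis would require strengthening Lemma~\ref{lem:local model} to control deformations beyond the $p$-th infinitesimal neighborhood, which is the genuine barrier to a uniform-in-$p$ argument via this method.
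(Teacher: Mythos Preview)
Your argument is correct and, for the final bootstrapping step, cleaner than the paper's. The first two thirds coincide exactly with the paper: Proposition~\ref{prop:scheme} identifies $\delta(\mathcal M)\cap\mathcal N^g$ with $\mathcal V(\Lambda)^{\bar g}$, and then Lemma~\ref{lem:local model} together with Proposition~\ref{prop:compute R} and the hypothesis $p>c$ yield $\hat{\mathcal S}/\mathfrak m^p\cong k[X]/X^c$.

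Where you diverge is in the passage from this truncated statement to the full one. The paper invokes a separate abstract lemma (a variant of \cite[Lemma 11.1]{RTZ}) whose proof runs through Krull's intersection theorem and an inductive argument showing $Y^c\in I+\mathfrak m^{pl}$ for all $l\geq 1$. Your route is more direct: from $\hat{\mathcal S}/\mathfrak m^p\cong k[X]/X^c$ you read off $\mathfrak m^c\subseteq \mathfrak m^p\subseteq \mathfrak m^{c+1}$, hence $\mathfrak m^c=\mathfrak m\cdot\mathfrak m^c$, and Nakayama kills $\mathfrak m^c$. This is valid because $\hat{\mathcal S}$ is Noetherian (being a quotient of $k[[X_1,\ldots,X_d]]$, as the paper notes via smoothness of $\mathcal V(\Lambda)$), so $\mathfrak m^c$ is finitely generated. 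Your argument in fact gives a shorter proof of the paper's abstract lemma itself. The paper's longer argument buys nothing extra in this setting; both approaches use the hypothesis $p>c$ in the same essential way, and your closing remarks about the obstruction to removing it are accurate.
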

\begin{proof}
Let $\hat {\mathcal S}$ be the complete local ring of $\delta(\mathcal M) \cap \mathcal N^g$ at $x_0$. By Proposition \ref{prop:scheme} and by the fact that $\mathcal V(\Lambda)$ is smooth of dimension $d$ (\S \ref{subsec:V(Lambda)}), we know that $\hat{\mathcal{ S}}$ is a quotient of the power series ring $k[[X_1,\cdots, X_d]]$. By Proposition \ref{prop:scheme}, Lemma \ref{lem:local model}, Proposition \ref{prop:compute R}, we know that $\hat {\mathcal S}/\mathfrak m _{ \hat {\mathcal S}} ^p$ is isomorphic to $k[X]/X^{c}$ as a $k$-algebra. In such a situation, it follows from the next abstract lemma that $\hat{\mathcal S} \cong k[X]/X^{c}$.
\end{proof}

\begin{lem}Let $I$ be a proper ideal of $k[[X_1,\cdots, X_d]]$ and let $\hat {\mathcal S} = k[[X_1,\cdots, X_{d}]]/I .$ Let $\mathfrak m$ be the maximal ideal of $k [[X_1,\cdots, X_{d}]]$ and let $ {\mathfrak  m }_{\hat{\mathcal S}}$ be the maximal ideal of $\hat {\mathcal S}$. Assume there is a $k$-algebra isomorphism $\beta:\hat{\mathcal S}/ {\mathfrak  m }_{\hat{\mathcal S}} ^p \isom k[X]/X^c$ for some integer $1 \leq c <p$. Then
	$\hat {\mathcal S}$ is isomorphic to $k[X]/ X^c$ as a $k$-algebra.
\end{lem}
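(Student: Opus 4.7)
The plan is to single out one element $\tilde x \in \hat{\mathcal S}$ lifting the class of $X$ under $\beta$, show it alone generates the maximal ideal, and verify it satisfies the relation $\tilde x^c = 0$; then a dimension count forces $\hat{\mathcal S} \cong k[X]/X^c$.

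First, I would fix a lift $\tilde x \in \mathfrak m_{\hat{\mathcal S}}$ of $\beta^{-1}(X) \in \mathfrak m_{\hat{\mathcal S}}/\mathfrak m_{\hat{\mathcal S}}^p$. Since $p \geq 2$, we have $\mathfrak m_{\hat{\mathcal S}}^p \subseteq \mathfrak m_{\hat{\mathcal S}}^2$, so the natural map
\[
\mathfrak m_{\hat{\mathcal S}}/\mathfrak m_{\hat{\mathcal S}}^2 \;\xrightarrow{\sim}\; \bigl(\mathfrak m_{\hat{\mathcal S}}/\mathfrak m_{\hat{\mathcal S}}^p\bigr)\big/\bigl(\mathfrak m_{\hat{\mathcal S}}/\mathfrak m_{\hat{\mathcal S}}^p\bigr)^2
\]
is an isomorphism, and under $\beta$ the right-hand side is the $1$-dimensional space $(X)/(X^2)$ inside $k[X]/X^c$, spanned by the image of $\tilde x$. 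Thus $\mathfrak m_{\hat{\mathcal S}}/\mathfrak m_{\hat{\mathcal S}}^2$ is a $1$-dimensional $k$-vector space generated by $\tilde x$, and Nakayama's lemma (applicable because $\hat{\mathcal S}$ is a quotient of the Noetherian complete local ring $k[[X_1,\ldots,X_d]]$) gives $\mathfrak m_{\hat{\mathcal S}} = (\tilde x)$. Consequently there is a continuous surjective $k$-algebra map
\[
\alpha: k[[X]] \twoheadrightarrow \hat{\mathcal S}, \qquad X \mapsto \tilde x.
\]

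Next, I would show that $\tilde x^c = 0$. By construction of $\beta$, the image of $\tilde x^c$ in $\hat{\mathcal S}/\mathfrak m_{\hat{\mathcal S}}^p$ equals $\beta^{-1}(X^c) = 0$, so $\tilde x^c \in \mathfrak m_{\hat{\mathcal S}}^p = (\tilde x^p)$ by the previous step. Write $\tilde x^c = \tilde x^p u$ for some $u \in \hat{\mathcal S}$; then
\[
\tilde x^c\bigl(1 - \tilde x^{p-c} u\bigr) = 0.
\]
Since $c < p$, the exponent $p-c$ is at least $1$, so $\tilde x^{p-c} u \in \mathfrak m_{\hat{\mathcal S}}$ and the factor in parentheses is a unit in the local ring $\hat{\mathcal S}$. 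Hence $\tilde x^c = 0$, and $\alpha$ descends to a surjection $\bar\alpha: k[X]/X^c \twoheadrightarrow \hat{\mathcal S}$.

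Finally, I would conclude with a dimension count over $k$: the source of $\bar\alpha$ has $k$-dimension $c$, so $\dim_k \hat{\mathcal S} \leq c$; conversely $\hat{\mathcal S}$ surjects onto $\hat{\mathcal S}/\mathfrak m_{\hat{\mathcal S}}^p \cong k[X]/X^c$, forcing $\dim_k \hat{\mathcal S} \geq c$. Thus $\bar\alpha$ is an isomorphism of finite-dimensional $k$-algebras. The only places where the hypotheses enter are $p \geq 2$ (used to identify the cotangent spaces so that Nakayama applies with a single generator) and $c < p$ (used to make $1 - \tilde x^{p-c} u$ a unit); neither presents a genuine obstacle, and the only real ``content'' of the argument is orchestrating the single-generator-plus-single-relation picture.
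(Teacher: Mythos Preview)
Your proof is correct and takes a genuinely different, more streamlined route than the paper. The paper works upstairs in $k[[X_1,\ldots,X_d]]$: it lets $J$ be the kernel of the composite $k[[X_1,\ldots,X_d]]\to \hat{\mathcal S}/\mathfrak m_{\hat{\mathcal S}}^p\cong k[X]/X^c$, notes $I+\mathfrak m^p=J$, and then proves $\mathfrak m^p\subset I$ by an induction showing $Y^c\in I+\mathfrak m^{pl}$ for all $l$, followed by Krull's intersection theorem. Your argument instead works entirely inside $\hat{\mathcal S}$: one Nakayama step gives $\mathfrak m_{\hat{\mathcal S}}=(\tilde x)$, and then the single relation $\tilde x^c=\tilde x^p u$ is killed by the unit $1-\tilde x^{p-c}u$. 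This replaces the paper's inductive bookkeeping with a one-line unit trick, and the final dimension count is cleaner than verifying $I=J$. The paper's approach has the minor advantage of making explicit exactly where $c<p$ enters in the ambient power series ring (which mirrors \cite[Lemma 11.1]{RTZ}); yours isolates the same hypothesis more transparently in the invertibility of $1-\tilde x^{p-c}u$.

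One small presentational point: your claim that $(X)/(X^2)$ is $1$-dimensional in $k[X]/X^c$ fails when $c=1$, since then $k[X]/X^c=k$ and the cotangent space is zero. In that case $\mathfrak m_{\hat{\mathcal S}}/\mathfrak m_{\hat{\mathcal S}}^2=0$, so Nakayama gives $\mathfrak m_{\hat{\mathcal S}}=0$ and $\hat{\mathcal S}=k$ directly; you should either say ``at most $1$-dimensional'' or, as the paper does, dispose of $c=1$ separately before running the main argument for $c\ge 2$.
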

\begin{proof}
		We first notice that if $R_1$ is any quotient ring of $k[[X_1,\cdots, X_{d}]]$ with its maximal ideal $\mathfrak m_1$ satisfying $\mathfrak m_1 = \mathfrak m_1^2$ (i.e. $R_1 $ has zero cotangent space), then $R_1 =k$. In fact, $R_1$ is noetherian and we have $\mathfrak m_1^l = \mathfrak m_1 $ for all $l\in \ZZ_{\geq 1}$, so by Krull's intersection theorem we conclude that $\mathfrak m_1 = 0$ and $R_1 =k$. 
		
		Suppose $c=1$. Then $\hat {\mathcal S}/ {\mathfrak  m }_{\hat{\mathcal S}} ^p \cong k$, so $\hat {\mathcal{S}}$ has zero cotangent space and thus $\hat {\mathcal S }= k$ as desired. Next we treat the case $c\geq 2$. Let $\alpha$ be the composite 
		$$\alpha: k [[ X_1,\cdots, X_{d}]] \to\hat { \mathcal S}/{\mathfrak  m }_{\hat{\mathcal S}}^p  \xrightarrow{\beta}  k[X]/X^c. $$ Let $J = \ker \alpha$. Since $\alpha$ is surjective, we reduce to prove that $I= J$. Note that because $\beta$ is an isomorphism we have \begin{align}\label{first reln between I and J}
		I +  {\mathfrak m }^p = J.
		\end{align}
		In the following we prove $\mathfrak m^p \subset I$, which will imply $I= J$ and hence the lemma. The argument is a variant of \cite[Lemma 11.1]{RTZ}. 
		
		Let $Y \in k [[ X_1,\cdots, X_{d}]]$ be such that $\alpha(Y) =X$. Since $X$ generates the maximal ideal in $k[X]/X^c$, we have 
		\begin{align}\label{m=J+Y} \mathfrak m =  J +(Y).
		\end{align} Then by (\ref{first reln between I and J}) and (\ref{m=J+Y}) we have $\mathfrak m = I + (Y) + \mathfrak m ^p$,
		and so the local ring $k[[X_1,\cdots, X_{d}]]/ (I+(Y))$ has zero cotangent space. We have observed that the cotangent space being zero implies that the ring has to be $k$, or equivalently
		\begin{align}\label{m=I+Y}
		\mathfrak m = I + (Y)  .
		\end{align}
		
		Now we start to show $\mathfrak m^p \subset I$. By (\ref{m=I+Y}) we have $\mathfrak m^p \subset I+ (Y^p)$, so we only need to prove $Y^p\in  I$. We will show the stronger statement that $Y^c \in I$. By Krull's intersection theorem, it suffices to show that $Y^c \in I + \mathfrak m ^{pl}$ for all $l\geq 1$. In the following we show this by induction on $l$.
		
		Assume $l=1$. Note that $\alpha (Y^c) =0$, so by (\ref{first reln between I and J} ) we have $$ Y^c \in J=  I + \mathfrak m^p.$$ Suppose $Y^ c \in  I + \mathfrak m^{pl}$ for an integer $ l \geq 1$. Write
		\begin{align}\label{decomp Y^c}
		Y^c = i +  m, ~i \in I,~ m \in \mathfrak m^{pl}.
		\end{align}   By (\ref{m=J+Y}) we know $$\mathfrak m^{pl} \subset ( J + (Y)) ^{pl} \subset \sum _{s=0}^{pl} J^s (Y) ^{pl-s}. $$ Thus we can decompose $m\in \mathfrak m ^{pl}$ into a sum 
		\begin{align}\label{decomp m}
		m = \sum_{s=0} ^{pl} j_s Y^{pl-s},~ j_s \in J^s.
		\end{align}
		By (\ref{decomp Y^c}) and (\ref{decomp m}), we have 
		\begin{align*}
		Y^c = i +  \sum _{s=0}^{pl} j_s Y ^{pl-s} ,
		\end{align*}
and so 
\begin{align}\label{to extract A}
		Y^c - \sum _{s=0}^{pl-c} j_s Y ^{pl-s} =  i + \sum _{s=pl-c+1}^{pl} j_s Y ^{pl-s}.
		\end{align}
		
		Denote $$A : = \sum_{s =0 } ^{pl-c} j_s Y^{pl-s-c} .$$ Then the left hand side of (\ref{to extract A}) is equal to $(1-A) Y^c$. Hence we have $$(1-A)  Y^c = i + \sum_{ s= pl-c+1} ^{pl} j_s Y^{pl -s}  \subset I + J^{pl-c+1} \xlongequal{(\ref{first reln between I and J})} I+ (I+\mathfrak m^p) ^{pl-c+1} = I + \mathfrak m ^{p (pl-c+1)}  \subset I + \mathfrak m ^{p(l+1)}, $$ where for the last inclusion we have used $c< p$. Since $1-A$ is a unit in $k[[X_1,\cdots, X_{d}]] $ (because $c<p$), we have $Y^c \in I + \mathfrak m^{p(l+1)}$. By induction, $Y^c \in I + \mathfrak m^{pl}$ for all $l\in \ZZ_{\geq 1}$, as desired.
	\end{proof}

\bibliographystyle{hep}
\bibliography{myref}
	
\end{document}